\newtheorem{theorem}{Theorem}
\newtheorem{lemma}[theorem]{Lemma}
\newtheorem{prop}[theorem]{Proposition}
\newtheorem{cor}[theorem]{Corollary}
\theoremstyle{definition}
\newtheorem{remark}[theorem]{Remark}
\newtheorem{example}[theorem]{Example}
\numberwithin{theorem}{section}
\newcommand{\HF}{\widehat{HF}}
\newcommand{\on}{\operatorname}
\renewcommand{\d}{\partial}
\newcommand{\Map}{\on{Map}}
\newcommand{\PBraid}{\text{PBraid}}
\newcommand{\Spinc}{\on{Spin}^c}
\newcommand{\ZZ}{\mathbb{Z}}
\newcommand{\RR}{\mathbb{R}}
\newcommand\goth[1]{\mathfrak{#1}}
\newcommand{\s}{\goth{s}}
\renewcommand{\t}{\goth{t}}
\newcommand{\sign}{\on{sign}}
\newcommand{\unknot}{\mbox{unknot}}
\newcommand{\tb}{\on{tb}}
\newcommand{\rot}{\on{rot}}
\begin{document}

\author{Olga Plamenevskaya}
\address{Department of Mathematics, Stony Brook University, Stony Brook, NY 11794}
\email{olga@math.sunysb.edu}
\thanks{Partially supported by 
NSF grant DMS-0805836.}
\title{On Legendrian surgeries between lens spaces}
%\subjclass{?}

\begin{abstract} We obtain some obstructions to existence of Legendrian surgeries between tight lens spaces. We also study  
Legendrian surgeries between overtwisted contact manifolds.
\end{abstract}

\maketitle

\section{Introduction}

The focus of this paper is the following basic question:
given two closed contact 3-manifolds $(Y_1, \xi_1)$ and $(Y_2, \xi_2)$, can a Legendrian surgery on a link in $(Y_1, \xi_1)$ 
produce $(Y_2, \xi_2)$? If so, what can be said about the number of components of the surgery link? What happens if we don't fix contact 
structures $\xi_1$ and $\xi_2$, but just want to obtain, say, a tight $Y_2$ from a tight $Y_1$?
 
It is well known that if 
$(Y_2, \xi_2)$ is obtained from $(Y_1, \xi_1)$ by a surgery on Legendrian link, the smooth cobordism between  $(Y_1, \xi_1)$  and $(Y_2, \xi_2)$
has a Stein structure \cite{E1}.  Such cobordisms were studied by Etnyre and Honda \cite{EH}; in particular, they showed a Stein cobordism 
between  $(Y_1, \xi_1)$ and $(Y_2, \xi_2)$ exists whenever $(Y_1, \xi_1)$ is overtwisted. 
If $(Y_1, \xi_1)$ is Stein fillable, a Legendrian surgery produces a Stein filling of $(Y_2, \xi_2)$, and one can try to find obstructions to 
Legendrian surgeries by considering Stein fillings. From known classification theorems \cite{El, McD, Lis} we  immediately see, for example, 
that standard contact structures on $L(p,1)$ cannot be obtained from any lens space except $S^3$ (see Section \ref{fillings} for more results 
of this sort). 
 More interestingly, we
examine Stein fillings of planar open books to prove 
\begin{theorem} \label{self}
No tight lens space can be obtained from itself by Legendrian surgery on a link.
\end{theorem}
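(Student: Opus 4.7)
The plan is to derive a contradiction by showing that the Euler characteristics of Stein fillings of tight contact structures on $L(p,q)$ admit a uniform upper bound, while Legendrian surgery strictly increases this Euler characteristic.

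Suppose, for contradiction, that a Legendrian surgery on a nonempty $n$-component link $L \subset (L(p,q), \xi)$ with $\xi$ tight produces $(L(p,q), \xi')$. Since Legendrian surgery preserves Stein fillability, $\xi'$ is Stein fillable, hence tight. Given any Stein filling $W$ of $(L(p,q), \xi)$, attaching the $n$ Legendrian $2$-handles produces a Stein filling $W'$ of $(L(p,q), \xi')$ with
\[
\chi(W') = \chi(W) + n > \chi(W).
\]

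The core step is a uniform bound on $\chi(W)$ across all Stein fillings $W$ of all tight contact structures on $L(p,q)$. By Sch\"onenberger's construction, every tight contact structure on a lens space is supported by a planar open book. Wendl's theorem then presents any Stein filling of such $(L(p,q), \xi)$ as an allowable Lefschetz fibration $\pi : W \to D^2$ with regular fiber the open book's page $S$, so $\chi(W) = \chi(S) + k$, where $k$ is the number of singular fibers, equivalently the length of the associated positive Dehn twist factorization of the monodromy $\phi_\xi$. A homological argument in the mapping class group of the planar page, using the abelianization to count signed Dehn twists, shows any two positive factorizations of $\phi_\xi$ have the same length; hence $\chi(W)$ is determined by $\xi$. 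Since $L(p,q)$ carries only finitely many tight contact structures (Honda), the maximum $\chi_{\max}$ over all tight $\xi$ exists.

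Choosing $W$ to realize $\chi_{\max}$, the surgered filling $W'$ satisfies $\chi(W') > \chi_{\max}$, a contradiction. The principal obstacle is the mapping class group argument constraining the length of positive Dehn twist factorizations on a planar surface; this is the technical heart of the "Stein fillings of planar open books" analysis promised in the introduction, and it is precisely what prevents a cylindrical-looking Stein cobordism from existing between two tight contact structures on the same lens space.
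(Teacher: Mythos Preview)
Your overall strategy matches the paper's: use Wendl's theorem to realize every Stein filling as a positive allowable Lefschetz fibration over the given planar page, bound the number of vanishing cycles, and observe that Legendrian self-surgery would violate the bound. The gap is the claim that ``any two positive factorizations of $\phi_\xi$ have the same length.'' This is false once the page has at least three holes: the lantern relation in $P_3$ writes the same mapping class both as a product of four positive Dehn twists (the boundary twists) and as a product of three. In particular there is no homomorphism $\Map(P_m)\to\ZZ$ taking every positive Dehn twist to $+1$, so the abelianization does not recover the factorization length.

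What the paper establishes is only an \emph{upper bound} on the length, and the argument is more delicate than an abelianization count. Collapsing holes to punctures gives $\Phi:\Map(P_m)\to\PBraid_m$, and composing with the algebraic crossing number $\Sigma:\PBraid_m\to\ZZ$ gives an invariant of the monodromy. Each positive Dehn twist enclosing at least two holes contributes a strictly positive amount to $(\Sigma\circ\Phi)(\phi_\xi)$, bounding the number of such twists. Twists around a single hole $D_i$ lie in $\ker\Phi$; to bound those one applies a self-homeomorphism of $P_m$ exchanging the roles of $\partial D_i$ and the outer boundary $\partial D$ and repeats the crossing-number argument. Summing over $i$ yields a finite bound on the total number of positive twists in any factorization, hence on $b_2$ (equivalently $\chi$) of any filling.

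A secondary issue: you read the statement as allowing $\xi'\neq\xi$ and appeal to Honda's finiteness to take a global $\chi_{\max}$ over all tight structures. But then ``choose $W$ to realize $\chi_{\max}$'' is illegitimate, since $W$ must fill the \emph{given} $\xi$, which need not be the structure attaining the global maximum; so $\chi(W')>\chi_{\max}$ does not follow. The paper's Theorem~\ref{planar} is stated for a single $(Y,\xi)$: with the bound for that one $\xi$ in hand, iterating the self-surgery produces fillings of $(Y,\xi)$ of arbitrarily large $b_2$, and no finiteness of tight contact structures is needed.
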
 

By contrast, it is easy to show that every overtwisted contact manifold can be obtain from itself by a Legendrian surgery on an appropriate 
Legendrian unknot (see Proposition \ref{cando}). 

Theorem \ref{self}   applies more generally to fillable contact 3-manifolds admitting planar open books. To prove the theorem, we show that for any 
contact structure supported by a planar open book, there is a bound on the second Betti number of its fillings (see Corollary \ref{bound}).

When a Stein cobordism from $(Y_1, \xi_1)$ to $(Y_2, \xi_2)$ exists, one can wonder about its Betti numbers  
(note that cobordisms we consider arise from surgeries, and consist of 2-handles only). In particular, we can ask whether $(Y_2, \xi_2)$ can be obtained from
$(Y_1, \xi_1)$ by a Legendrian surgery on a single knot; for the question to be meaningful,  
we will assume that obvious obstructions to existence of such surgery, both from the topological viewpoint (eg from the singular 
homology of $Y_1$ and $Y_2$) and from the homotopy classes of contact structures all vanish.
Of course, this question is closely related to the question of integral knot surgery:
can $Y_2$ be obtained from $Y_1$ by an integral surgery on a single knot? Such knot surgery questions have been studied by means of many tools. An important classical result, the knot 
complement theorem of Gordon--Luecke \cite{GL}, asserts that if $S^3$ is obtained by a non-trivial surgery on a knot $K \subset S^3$, then $K$ 
is an unknot.  More recently, tools from the Heegaard Floer theory led to substantial progress \cite{Hed, OSlens, Ras} on understanding     
classical knots with lens space surgeries, and enumerating lens spaces that can be obtained from $S^3$ by a surgery on a single knot (Berge's conjecture).  These results, together with considerations of the Thurston--Bennequin number, give some immediate easy results for Legendrian 
knot surgeries (see Section \ref{tb}).  Further, one expects that there are additional obstructions for surgeries between tight contact structures.
Tight lens spaces are the easiest example because they have a complete classification, are all Stein fillable, and have simplest possible 
Heegaard Floer homology. Using Heegaard Floer contact invariants, we get    

\begin{theorem} \label{HFlo} No tight contact structure on $L(p_2, q_2)$ can be obtained from a tight contact $L(p_1, q_1)$ by a Legendrian surgery 
on a single knot if $p_1, p_2$ are coprime,  $-p_1 q_2$  is a square modulo $p_2$ and $-1$ is not.
\end{theorem}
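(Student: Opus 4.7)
The plan is to suppose such a Legendrian surgery exists and derive a contradiction by combining the naturality of Heegaard Floer contact invariants under Stein cobordism with an explicit $d$-invariant computation on lens spaces. Let $K \subset (L(p_1,q_1), \xi_1)$ be the surgery knot, producing the Stein cobordism $W: L(p_1, q_1) \to L(p_2, q_2)$ with $b_2(W) = 1$; let $\s_W$ denote the canonical $\Spinc$ structure on $W$ and $\s_i = \s_W|_{L(p_i,q_i)}$ the $\Spinc$ structure induced by $\xi_i$.

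First, I would extract the Heegaard Floer constraint. Tight contact structures on lens spaces are Stein fillable, so each $c(\xi_i)$ is a nonzero generator of $\HF(-L(p_i,q_i), \s_i) \cong \ZZ$. The naturality of the contact invariant under Stein cobordism then provides a nonzero map
\[
F_{-W, \s_W} : \HF(-L(p_2, q_2), \s_2) \to \HF(-L(p_1, q_1), \s_1), \qquad c(\xi_2) \mapsto c(\xi_1).
\]
Because each of these $\ZZ$-summands sits in a single absolute $\Q$-grading, the degree shift of $F_{-W, \s_W}$ must match the difference of correction terms:
\[
\frac{c_1(\s_W)^2 - 2\chi(W) - 3\sigma(W)}{4} \;=\; d(-L(p_1, q_1), \s_1) - d(-L(p_2, q_2), \s_2).
\]

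Second, I would unwind the topology of $W$. Since $L(p_1, q_1)$ is a rational homology sphere, $K$ is rationally null-homologous and $H_2(W; \Q) \cong \Q$ is generated by a rational surface $\h F$ obtained by capping off a rational Seifert surface of $K$ with the core of the $2$-handle. Its self-intersection $\h F \cdot \h F$ and the evaluation $\langle c_1(\s_W), \h F \rangle$ are computable explicitly from $\tb(K)$, $\rot(K)$, and the linking forms $-q_i/p_i$ of the boundary lens spaces. Substituting into the identity above, and invoking the Ozsv\'ath--Szab\'o closed formulas for the $d$-invariants of lens spaces, I would then use the coprimality of $p_1$ and $p_2$ to invert $p_1$ modulo $p_2$, reducing the resulting rational equation to a clean congruence asserting that $p_1 q_2$ is a square modulo $p_2$.

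The final, number-theoretic step is then immediate: combining $-p_1 q_2 \equiv \square \pmod{p_2}$ (by hypothesis) with $p_1 q_2 \equiv \square \pmod{p_2}$ (from the previous step) forces $-1$ to be a square modulo $p_2$, contradicting the remaining hypothesis. The hard part is the middle step: one must correctly identify $c_1(\s_W)$ and $\h F \cdot \h F$ for this rationally non-split cobordism, and carefully track the rational contributions from the linking forms of both boundary lens spaces through the $d$-invariant formulas until the equation collapses to a statement about quadratic residues modulo $p_2$. One must also ensure the argument handles the discrete freedom in choosing $\s_W$ among $\Spinc$ structures on $W$ with the prescribed boundary restrictions, so that the contradiction is robust against this choice.
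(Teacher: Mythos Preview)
Your approach diverges from the paper's, and the step you flag as ``the hard part'' is where it breaks down.

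The paper does not touch $d$-invariants or the degree-shift formula at all. Instead it reverses the cobordism to view $-L(p_1,q_1)$ as an integral surgery on a knot $K\subset -L(p_2,q_2)$ with framing $l$, and places this in a surgery triple $(Y,Y_l,Y_{l+m})=(-L(p_2,q_2),-L(p_1,q_1),Y_{l+m})$. The self-linking number of $K$ in $-L(p_2,q_2)$ is necessarily of the form $k^2 q_2^{-1}\pmod{p_2}$ (a fact about the linking form of a lens space), so the integer $a$ controlling the framing satisfies $aq_2\equiv\square\pmod{p_2}$. Since $|a|=p_1$, the hypotheses ($-p_1q_2$ a square, $-1$ not) force $a=-p_1$. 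Then $|H_1(Y_{l+m})|=p_1+p_2=|H_1(Y)|+|H_1(Y_l)|$, and the L-space lemma from the exact triangle shows the \emph{total} map $F:\HF(-L(p_2,q_2))\to\HF(-L(p_1,q_1))$ vanishes, contradicting $F(c(\xi_2))=c(\xi_1)\neq 0$.

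Your plan has a structural problem: the degree-shift identity you write down is \emph{automatically satisfied} whenever $F_{-W,\s_W}$ is nonzero between $L$-spaces, because the contact invariants sit in the grading given by the correction terms. So that equation is a tautology here, not a constraint, and cannot by itself yield a contradiction. You then hope it ``collapses'' to $p_1q_2\equiv\square\pmod{p_2}$, but that congruence is not encoded in the $d$-invariants; it is a direct consequence of the linking form on $H_1(-L(p_2,q_2))$ applied to the self-linking of $K$. Once you know that, there is nothing left for the $d$-invariants to do: the sign of $a$ is already determined, and the vanishing of the cobordism map follows from the exact triangle, not from any grading mismatch. In short, the quadratic-residue input is topological (linking form), and the contradiction is homological (exact triangle); your proposed analytic route through $c_1^2$ and correction terms does not access either.
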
  

(We follow the notation convention where $L(p,q)$ stands for the oriented manifold which is a $-p/q$ surgery on the unknot in $S^3$. Note that this 
is opposite to conventions in \cite{OSlens, Ras}. We always assume that $p>q>0$.)

Note that Theorem \ref{HFlo} does not contain any information about the existence of topological surgery. In some cases, 
an integral surgery between two lens spaces exists but can't be made into a Legendrian surgery between tight contact structures; 
 in other cases, we can only claim that a Legendrian surgery between 
tight lens spaces can't exist, and it is possible that there's no integral surgery whatsoever. (We give examples in Section \ref{hfloer}.)

It is also useful to observe that  for small lens spaces, one can rule out Legendrian knot surgeries between tight contact structures
simply by computing all of their 3-dimensional homotopy invariants. However, there are  examples where Theorem \ref{HFlo}
gives an obstruction to the existence of Legendrian surgery, but the homotopy invariants don't.

%For a corollary, consider lens spaces  $L( , )$ and $L( , )$.  The corresponding linear plumbings differ only in that 
%the link for $L( , )$ has one extra component,
%so these lens spaces  can be related by an integral surgery on the obvious knot. 
%moreover, each tight contact structure on $L( , )$ arises as a result of Legendrian surgery on a certain Legendrian representative of that knot 
%in one of the tight contact structures on $L( , )$.  Let $(L( , ), \zeta)$ and $(L( , ), \eta)$ be related by this obvious Legendrian surgery. 
%(Figure \ref{lens-example} gives an illustration for $L(?, ?)$ and $L(,)$.)
%Then, 
%Theorem 1 implies that $(L(, ), \zeta)$  cannot be obtained from  $(L( , ), \eta)$ by Legendrian surgery on any link; Theorem 2    
%says that no tight contact structure on $L( , )$ can be obtained from any tight $L( , )$ by a Legendrian surgery on a knot. 

%\begin{figure}[htb] 
%\includegraphics[scale=0.75]{lenslinks.eps}
%\caption{Tight lens spaces $(L(, ), \zeta)$ and  $(L( , ), \eta)$.}\label{lens-example} 
%\end{figure}  

Next, we turn attention to overtwisted manifolds. By \cite{EH}, any contact manifold $(Y_2, \xi_2)$ can be obtained from an overtwisted  
$(Y_1, \xi_1)$  by a Legendrian 
surgery on a link. A priori, this Legendrian link may have more components than a smallest link in $Y_1$ with an integral surgery producing $Y_2$. 

We have 
\begin{theorem} Suppose $Y_2$ can be obtained from $Y_1$ by an integral surgery on $m$-component link. Let $\zeta$ be a tight, $\eta$ an overtwisted 
contact structure on $Y_2$.  Then $(Y_2, \eta)$ can be obtained from some overtwisted contact structure on $Y_1$ by a Legendrian surgery on an $m$-component link;  $(Y_2, \zeta)$ can be obtained from some overtwisted $Y_1$ by a Legendrian surgery on an $m+1$ component link.
\end{theorem}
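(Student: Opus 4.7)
The plan is to treat the overtwisted target and the tight target separately, with the tight case obtained by appending a single auxiliary Legendrian component to the construction for the overtwisted case.

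For the overtwisted target $\eta$, let $L = L_1 \cup \cdots \cup L_m \subset Y_1$ be the given smooth surgery link with integer framings $n_1, \dots, n_m$ realizing $Y_2$. I would first choose an overtwisted contact structure $\xi_1$ on $Y_1$ possessing an overtwisted disk $D \subset Y_1 \setminus L$; such a $\xi_1$ always exists, since a Lutz twist along a transverse unknot in $Y_1 \setminus L$ produces such a disk. In an overtwisted contact manifold every smooth knot type admits loose Legendrian representatives of arbitrary Thurston--Bennequin number (subject to a parity constraint), so one can isotope each $L_i$ to a Legendrian knot with $\tb(L_i) = n_i + 1$, turning the prescribed smooth framing into the Legendrian surgery framing. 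The resulting Legendrian surgery yields $(Y_2, \xi_2)$, which is overtwisted because $D$ is disjoint from $L$ and hence persists in $Y_2$. By Eliashberg's classification, it then suffices to verify that $\xi_2$ and $\eta$ can be arranged to be homotopic as oriented $2$-plane fields. The available parameters are (i) the homotopy class of $\xi_1$, which ranges over all plane-field classes on $Y_1$ by iterated Lutz twists, and (ii) the signs of stabilization on each Legendrian $L_i$, each of which shifts $\rot(L_i)$ by $\pm 1$; using Gompf's formula for the change of $d_3$ under a Weinstein $2$-handle attachment, these parameters can be tuned to surject onto the plane-field classes on $Y_2$.

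For the tight target $\zeta$, the idea is to perform the same construction with one additional Legendrian component $U$ that converts the overtwistedness produced in Part 1 into the tight structure $\zeta$. Take $\tilde L = L \cup U$, where $U$ is a smooth unknot of framing $-1$ in a small Darboux ball $B \subset Y_1 \setminus L$; since $(-1)$-surgery on the unknot is smoothly trivial, $\tilde L$ still presents $Y_2$. In an auxiliary overtwisted $\xi_1$ chosen as in Part 1 and arranged to contain an overtwisted disk inside $B$, realize $U$ as a loose Legendrian unknot with $\tb(U) = 0$ inside $B$. The Weinstein $2$-handle attached along this $U$ then plays the role of an inverse Lutz twist, and together with the Weinstein handles along the $L_i$ it cancels out the auxiliary overtwisted disks used to realize the large $\tb$ values, producing a tight contact structure on $Y_2$. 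By the same $d_3$-analysis as in Part 1 --- now with the extra free parameter coming from the stabilization sign of $U$ --- the resulting tight structure can be tuned to agree with the prescribed $\zeta$.

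The main obstacle in both parts is the homotopy-class surjectivity check: showing that the parameters (the homotopy class of $\xi_1$ together with the stabilization signs on each of the $m$ or $m+1$ Legendrian components) combine under Gompf's $d_3$-formula to realize any target class on $Y_2$. A secondary subtlety in Part 2 is verifying that the Weinstein $2$-handles along $U$ and the $L_i$ truly cancel all the overtwisted disks one has introduced in $\xi_1$; this rests on the standard inverse relationship between Lutz twists and Legendrian surgeries on appropriately positioned loose Legendrian knots, and once established the rest of the proof is a routine bookkeeping exercise.
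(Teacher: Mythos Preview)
Your Part 1 is a reasonable alternative to the paper's argument, though it leaves the surjectivity of the homotopy-class map as an exercise. The paper sidesteps this entirely by working backward: starting from $(Y_2,\eta)$, it realizes the dual link as Legendrian with the correct framings for $+1$ contact surgery (possible because $\eta$ is overtwisted), performs those $+1$ surgeries to land on some overtwisted $(Y_1,\xi_1)$, and then observes that the inverse operation (Legendrian surgery on the push-offs) recovers $(Y_2,\eta)$ on the nose. No $d_3$-bookkeeping is needed.

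Your Part 2, however, contains a genuine gap. The assertion that a Weinstein $2$-handle along a loose $\tb=0$ unknot $U$ ``plays the role of an inverse Lutz twist'' and cancels the overtwisted disks is not a standard fact, and in the setup you describe it is false. You explicitly use looseness of the $L_i$ to achieve the prescribed Thurston--Bennequin numbers; this means there are overtwisted disks in the complement of the \emph{entire} link $L\cup U$, and those disks survive any Legendrian surgery on $L\cup U$. Legendrian surgery preserves tightness but does not manufacture it from an overtwisted complement. So your construction always yields an overtwisted $(Y_2,\xi_2)$, never the tight $\zeta$.

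The paper's fix is to reverse the order of operations: first perform a $+1$ contact surgery on a ``shark'' (a stabilized $\tb=-2$ unknot) in $(Y_2,\zeta)$ to obtain an overtwisted $(Y_2,\xi_2')$; then apply Part 1 to realize $(Y_2,\xi_2')$ by Legendrian surgery on an $m$-component link in some overtwisted $(Y_1,\xi_1)$; finally, append the Legendrian push-off of the shark as the $(m{+}1)$-st component. Since $-1$ and $+1$ contact surgeries on a Legendrian knot and its push-off cancel, this last handle recovers $(Y_2,\zeta)$ exactly. The extra component is thus a cancellation partner, not a tightness-restoring device.
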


We  establish a variety of other related results (many of them mere observations) in Section \ref{otwist}. 
We also give examples where $Y_1$, $Y_2$ are related by an integral surgery on a single knot, but a two-component Legendrian link  
is needed to produce any tight $Y_2$ from any overtwisted $Y_1$ by a Legendrian surgery.

We will assume that the reader is familiar with  basic contact topology constructions and with Heegaard Floer contact invariants,  
and state the necessary facts only very briefly.  
   
\subsection*{Acknowledgements} I am grateful to John Etnyre and Jeremy Van Horn-Morris for some helpful conversations, and Chris Wendl for his useful
comments. I would also like to thank the referee for pointing out a sign mistake in Section \ref{hfloer}.
   
%%%%%%%%%%%%%%%%%%%%%%%%%%%%%%%%%

\section{Obstructions from Stein fillings} \label{fillings}

The first  way to find some obstructions for Stein cobordisms between lens spaces is provided by examining Stein fillings. Let $(L(p,q), \xi_{std})$ 
denote the (tight) contact lens space obtained as a quotient of $(S^3, \xi_{std})$. Symplectic fillings of $(L(p, q), \xi_{std})$ were classified by 
Lisca (up to orientation-preserving diffeomorphism and blow-up) \cite{Lis}; earlier results were obtained by Eliashberg for fillings of $(S^3, \xi_{std})$ \cite{El}, and by McDuff for $(L(p, 1), \xi_{std})$ \cite{McD}. We collect the statements about lens spaces with few fillings below; note that Lisca's results yield classification 
of Stein fillings up to diffeomorphism, since Stein surfaces are minimal. 
\begin{itemize}
\item $(S^3, \xi_{std})$ has a unique Stein filling, namely $D^4$.
\item For $p\neq 4$, $(L(p, 1), \xi_{std})$ has a unique filling, namely $D_{-p}$, the disk bundle over $S^2$ with Euler number $-p$.  
$(L(4,1), \xi_{std})$ has two fillings, $D_{-4}$ and another which is a rational homology ball.
\item $(L(p, p-1), \xi_{std})$ has a unique filling given by the diagram on Fig \ref{p,p-1}; note also that $L(p, p-1)$ carries a unique tight contact structure \cite{Ho}.
\item All lens spaces $(L(p, q), \xi_{std})$ have finitely many Stein fillings.
\end{itemize}

\begin{figure}[htb] 
\includegraphics[scale=0.8]{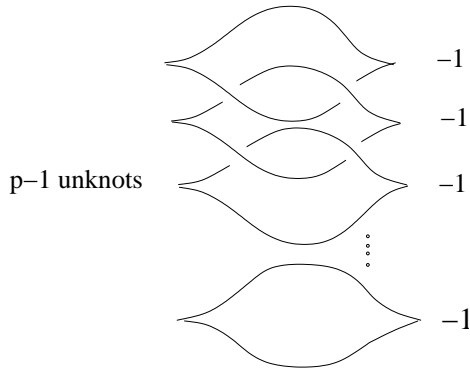}
\caption{The unique Stein filling of the tight $L(p, p-1)$. (The numbers indicate surgery coefficients relative to the contact framing.)}\label{p,p-1} 
\end{figure}

Since a Stein filling of $L(p_1, q_1)$ together with a Stein cobordism from $L(p_1, q_1)$ to  $L(p_2, q_2)$ forms a Stein 
filling of $L(p_2, q_2)$, the above results immediately give some obstructions to existence of Stein cobordisms. We can state them for both 
the standard contact structures and their conjugates, since a Stein filling $(X, J)$ of $(Y, \xi)$ induces a Stein filling 
$(X, \bar{J})$ of $(Y, \bar{\xi)}$. We also recall that  there are exactly two universally tight contact structures on $L(p, 1)$: $\xi_{std}$ and its conjugate. The proposition below holds because otherwise there would be non-standard fillings (with the second Betti number not allowed by the classification results above); to construct these non-standard fillings in (1) and (2),  start with the fillings for $L(p,q)$ that have no 1-handles. 

\begin{prop} (1)  a tight $S^3$ cannot be obtained from any other tight lens space by a Legendrian surgery on a link.
 
 (2) Universally tight $L(p, 1)$  can be obtained only from $(S^3, \xi_{std})$  (by a Legendrian surgery on a knot); it cannot be 
 obtained from any other tight lens space by a Legendrian surgery on any link.
 
 (3) A tight $L(n, n-1)$ can be obtained from a tight $L(m, m-1)$ by a Legendrian surgery on a knot if and only if $m=n-1$; 
 more generally,   a tight $L(n, n-1)$ can be obtained from a tight $L(m, m-1)$ by a Legendrian surgery on a link if and only if $m<n$.
 
\end{prop}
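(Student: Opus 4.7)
The plan is to combine the fact that a Legendrian-surgery cobordism is Stein \cite{E1} with the Stein-filling classifications listed above. If $(L(p_2,q_2),\xi_2)$ arises from $(L(p_1,q_1),\xi_1)$ by Legendrian surgery on a non-empty $k$-component link, then gluing the resulting Stein cobordism $W$ on top of any Stein filling $X_1$ of $(L(p_1,q_1),\xi_1)$ yields a Stein filling $X_2=X_1\cup_{L(p_1,q_1)}W$ of $(L(p_2,q_2),\xi_2)$. Since $W$ is built from $k$ two-handles and $\chi(L(p_1,q_1))=0$, Euler characteristics add: $\chi(X_2)=\chi(X_1)+k$. For every tight $(L(p_1,q_1),\xi_1)$, Honda's classification presents $X_1$ as a linear plumbing, in particular with no $1$-handles, so $\chi(X_1)=1+b_2(X_1)$, and $b_2(X_1)\geq 1$ whenever $p_1>1$.

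For (1), Eliashberg's theorem forces any Stein filling of $(S^3,\xi_{std})$ to be $D^4$, so $\chi(X_2)=1$ and the relation $b_2(X_1)+k=0$ is incompatible with $k\geq 1$ and $b_2(X_1)\geq 1$. For (2), Lisca's classification together with the conjugation symmetry $(X,J)\leftrightarrow (X,\bar J)$ (needed to cover the conjugate universally tight structure on $L(p,1)$) gives $b_2(X_2)\leq 1$ for any Stein filling of a universally tight $L(p,1)$, hence $b_2(X_1)+k\leq 1$; since $k\geq 1$ this forces $b_2(X_1)=0$ and $k=1$, i.e.\ $L(p_1,q_1)=S^3$ via Legendrian surgery on a single knot. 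The $L(4,1)$ rational-ball filling has $b_2=0$ and would force the even stricter $b_2(X_1)+k=0$, so it only reinforces the conclusion.

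For (3), the unique tight contact structure on $L(p,p-1)$ has a unique Stein filling (a chain of $p-1$ copies of $D_{-2}$) with $b_2=p-1$, so the Euler-characteristic balance pins $k=n-m$; nonemptiness of the link forces $n>m$, and the knot case $k=1$ forces $m=n-1$. The converse direction is explicit: $(L(n,n-1),\xi_{std})$ is realized by Legendrian surgery on a chain of $n-1$ Legendrian unknots of $\tb=-1$ in $(S^3,\xi_{std})$, so stopping after the first $m-1$ of them exhibits the tight $L(m,m-1)$, and the remaining $n-m$ Legendrian unknots form the required Legendrian link inside it whose Legendrian surgery produces $(L(n,n-1),\xi_{std})$.

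The main obstacle I expect is the bookkeeping in part (2): one must verify that Lisca's and McDuff's classifications, together with the $J\leftrightarrow\bar J$ symmetry, genuinely cover both universally tight contact structures on $L(p,1)$ and that no filling (including the $L(4,1)$ rational-ball anomaly and any filling for $p_2>1$ with $b_2\geq 2$) slips past the $b_2$ bound. Once the classifications are accounted for, the Euler-characteristic identity does all the real work.
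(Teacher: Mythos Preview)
Your argument is correct and is precisely the approach the paper takes: stack the Stein cobordism on a Stein filling of $(L(p_1,q_1),\xi_1)$ chosen to have no $1$-handles, and compare the resulting $b_2$ against the classification of fillings of the target. You have simply written out in detail what the paper compresses into the sentence preceding the proposition.
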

\qed

In fact, part (2) can be extended: in a joint work with Jeremy Van Horn-Morris \cite{PV} we show that every virtually overtwisted tight contact 
structure on $L(p,1)$ has a unique filling; it then follows that {\em any} tight  $L(p, 1)$  can be obtained only from $(S^3, \xi_{std})$. 

We now establish the main result of this section. 

\begin{theorem}\label{planar} Let $(Y, \xi)$ be a Stein fillable contact manifold that admits a compatible planar open book decomposition.
Then  $(Y, \xi)$ cannot be obtained from itself by Legendrian surgery on any link.
\end{theorem}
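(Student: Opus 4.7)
The plan is to argue by contradiction, leveraging Corollary \ref{bound}: once we know that all Stein fillings of a planar contact manifold have uniformly bounded second Betti number, a self-surgery immediately lets us inflate fillings without bound. So the theorem should reduce to an iteration argument built on top of that corollary.

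Concretely, I would assume for contradiction that $(Y,\xi)$ is obtained from itself by Legendrian surgery on a link with $k\ge 1$ components. By Eliashberg \cite{E1}, the surgery trace $W$ is a Stein cobordism from $(Y,\xi)$ to $(Y,\xi)$, built by attaching exactly $k$ two-handles to $Y\times[0,1]$. Fix any Stein filling $X_0$ of $(Y,\xi)$ (one exists by hypothesis) and set $X_n:=X_{n-1}\cup_Y W$ inductively. Since Stein domains glue across compatible contact boundaries, each $X_n$ is again a Stein filling of $(Y,\xi)$, and $X_n$ is obtained from $X_0$ by attaching $nk$ two-handles. A Stein domain has the homotopy type of a 2-complex, so $b_3(X_n)=b_4(X_n)=0$, whence $\chi(X_n)=\chi(X_0)+nk$ forces $b_2(X_n)\ge \chi(X_0)-1+nk\to\infty$. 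This contradicts the uniform bound provided by Corollary \ref{bound}, completing the argument.

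The serious work in this scheme hides in Corollary \ref{bound} rather than in the stacking argument itself, and that is where I expect the main obstacle to lie. I would derive the $b_2$-bound from Wendl's theorem that every strong symplectic filling of a planar contact 3-manifold is symplectically deformation equivalent to an allowable Lefschetz fibration whose regular fibre is the given planar page; the number of singular fibres, and hence $b_2$ of the filling, is then constrained by the set of positive factorizations of the monodromy in the mapping class group of a genus-zero surface. Once this control over fillings is in hand, the theorem follows at once from the iteration construction above; note also that the argument never uses the specific contact structure on the opposite end of $W$, which is why the conclusion is genuinely about self-surgery and not about some finer homotopy invariant of $\xi$.
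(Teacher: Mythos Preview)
Your overall architecture matches the paper's exactly: Wendl's theorem forces every Stein filling to arise from a positive factorization of a fixed planar monodromy, a bound on the length of such factorizations bounds $b_2$ of fillings, and then stacking copies of the self-cobordism $W$ onto a fixed filling violates that bound. The stacking step and Euler-characteristic count are fine.

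Two points to tighten. First, in the paper Corollary~\ref{bound} is extracted \emph{from} the proof of Theorem~\ref{planar}, not used as an input to it; so invoking it here is circular unless you actually prove it, which you only gesture at. Second, and more substantively, the phrase ``constrained by the set of positive factorizations of the monodromy'' is exactly the step that needs an argument: it is not a priori clear why a given mapping class on a planar surface cannot have arbitrarily long positive factorizations. The paper supplies a concrete invariant for this: collapse each hole to a puncture to get a homomorphism $\Phi:\Map(P_m)\to\PBraid_m$, and compose with the algebraic crossing number $\Sigma:\PBraid_m\to\ZZ$. Every positive Dehn twist enclosing at least two punctures contributes positively to $\Sigma\circ\Phi$, so their number is bounded by $(\Sigma\circ\Phi)(f)$; twists around a single hole $D_i$ are bounded by swapping $\partial D_i$ with the outer boundary and repeating. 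This is the content you are missing, and without it the proposal is a correct outline rather than a proof.
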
 

We recall that all tight contact structures on lens spaces admit planar open books \cite{Schon}.
Thus, Theorem \ref{self} is an immediate corollary of Theorem \ref{planar}.

\begin{proof} We will use  a theorem of Wendl \cite[Theorem 1]{We} as our key tool. Wendl essentially proves that every Stein filling 
of a contact structure with a planar open book decomposition admits (after an ``enlargement'' of the symplectic structure) a compatible positive allowable Lefschetz fibration that induces 
the given open book on the boundary. Topologically, this means that every Stein filling is diffeomorphic to 
a Lefschetz fibration that corresponds to the product of 
positive Dehn twists in some factorization of the monodromy for a {\em given} open book.  
We show  that this leads to an upper bound on Betti numbers of fillings. Indeed, suppose the page of the open book is  a disk with $m$ holes,  
$P_m = {D}- (D_1\cup D_2 \dots  \cup D_m)$; the monodromy $f$ is an 
an element of the mapping class group  $\Map(P_m)$ of the disk with $m$ holes.  

Shrinking the holes and thinking of them as punctured points, we get a homomorphism $\Phi: \Map(P_m) \to \PBraid_m$
into the pure braid group on $m$ strings. (Denh twists around the holes become trivial in $\PBraid_m$.)   Let $\Sigma: \PBraid_m \to \ZZ$ be given by the algebraic crossing number of braids; 
then $\Sigma \circ \Phi$ will be an invariant of the monodromy of our open book. Now, suppose that the monodromy $f$ is expressed 
as a product of positive Dehn twists around some simple closed curves in ${D}- (D_1\cup D_2 \dots  \cup D_m)$. We can consider the number of punctures 
enclosed by each curve, since these are simple closed curves in the disk, so the complement of each has the ``inside'' and the ``outside''
component. Observe that each Dehn twist that encloses more than one puncture contributes positively to the algebraic crossing 
number $(\Sigma \circ \Phi) (f)$ of the corresponding pure braid; this gives an upper bound on the number of such Dehn twists. 
To find an upper bound on the number of Dehn twists around the holes, pick a hole $D_i$ and consider 
a self-homeomorphism of the $m$-holed disk that exchanges the roles of $\partial D_i$ and $\partial  {D}$. 
The previous argument  now yields a bound on the number of Dehn twists around the outer component of the boundary, 
i.e. around the hole $D_i$. (The image of the monodromy in $\PBraid_m$ depends on the choice of the outer component, 
so we will get different bounds for different $i$.) We now have a bound on the total number of positive Dehn twists in decompositions 
of the monodromy, i.e. on the second Betti number of fillings of the contact structure compatible with $(P_m, f)$. 
   
  To complete the proof, observe that  if a contact manifold  can be obtained from itself by some Legendrian surgery, we could generate fillings with arbitary large $b_2$ by starting with an arbitary filling and adding multiple copies of  the collection of 2-handles that has trivial effect on the contact boundary.   
\end{proof}

The proof of Theorem \ref{planar} yields the following 

\begin{cor} \label{bound} If $(Y, \xi)$ admits a planar open book, there is a number $M= M(Y, \xi)$ such that 
for any Stein filling $X$ of $(Y, \xi)$, the bound $b_2(X) < M$ holds.  
\end{cor}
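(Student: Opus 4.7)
The plan is simply to extract the quantitative content that the proof of Theorem~\ref{planar} already establishes. Fix a compatible planar open book $(P_m, f)$ for $(Y, \xi)$. By Wendl's theorem, every Stein filling $X$ of $(Y, \xi)$ is diffeomorphic to a positive allowable Lefschetz fibration over $D^2$ with fiber $P_m$, obtained from $P_m \times D^2$ by attaching one $2$-handle per vanishing cycle in some positive Dehn twist factorization of $f$ in $\Map(P_m)$. The natural handle decomposition has one $0$-handle, $m$ $1$-handles and $k$ $2$-handles (one per vanishing cycle), so $b_2(X) \le k$. Thus it suffices to bound uniformly the length $k$ of any positive factorization of the fixed element $f$.

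For twists around curves enclosing at least two holes, I would use the homomorphism $\Phi\colon \Map(P_m) \to \PBraid_m$ obtained by collapsing holes to punctures, together with the algebraic crossing number $\Sigma\colon \PBraid_m \to \ZZ$; each such twist contributes at least $1$ to $(\Sigma \circ \Phi)(f)$, while twists parallel to a single hole become trivial in $\PBraid_m$ and contribute $0$. This bounds the number of ``multi-hole'' twists by $(\Sigma \circ \Phi)(f)$. To handle twists parallel to a specific hole $D_i$, I would precompose with a self-homeomorphism of $P_m$ swapping $\partial D_i$ with the outer boundary $\partial D$; this yields a new homomorphism $\Phi_i$ under which the former $D_i$-parallel twists become nontrivial pure braids, giving a bound $(\Sigma \circ \Phi_i)(f)$ on their number. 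Summing these bounds over the $m+1$ boundary components and the multi-hole contribution, we obtain an integer $N = N(P_m, f)$ majorizing the length of every positive factorization of $f$; setting $M(Y, \xi) = N + 1$ completes the proof.

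The only point to verify is that there are no hidden obstacles beyond what is already used in Theorem~\ref{planar}: the self-homeomorphism used in the boundary swap sends positive factorizations to positive factorizations, because conjugation by an orientation-preserving diffeomorphism carries right-handed Dehn twists to right-handed Dehn twists; and Wendl's theorem genuinely produces a Lefschetz fibration whose vanishing cycles are in bijection with the attached $2$-handles, so the bound on factorization length really does bound $b_2(X)$. Since both points appear in the proof of Theorem~\ref{planar}, the corollary is a repackaging rather than a new argument, and the main difficulty—the existence of the uniform bound $N(P_m,f)$—has already been overcome there.
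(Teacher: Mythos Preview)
Your proposal is correct and follows essentially the same approach as the paper: the corollary is stated immediately after Theorem~\ref{planar} with the remark that its proof yields the bound, and your argument simply makes explicit the quantitative content of that proof (Wendl's theorem, the crossing-number homomorphism $\Sigma\circ\Phi$, and the boundary-swap trick for single-hole twists). The only minor redundancy is that twists parallel to the outer boundary $\partial D$ already enclose all $m$ punctures and are counted among the multi-hole twists, so you need only $m$ swaps rather than $m+1$; this does not affect correctness.
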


\begin{remark} It is known \cite{Rong} that for lens spaces, the only true cosmetic Dehn surgeries 
(i.e. rational knot surgeries that preserve the oriented manifold)   
are those that ``switch'' the roles of the solid tori in a genus 1 Heegaard decomposition of the lens space. (More precisely, 
one of these solid tori in $L(p,q)$ is reglued to produce $L(p,q')$ with $qq' \equiv 1$ modulo $p_2$.) 
One can easily check that these surgeries can never be integral; thus the claim of Theorem~\ref{self} is moot for
Legendrian surgeries on {\em knots}.        
\end{remark} 

We also state an obvious corollary of Theorem \ref{self}:

\begin{cor} \label{back} If  a Legendrian surgery on a link in a tight $(L(p_1, q_1), \xi_1)$ produces a tight  $(L(p_2, q_2), \xi_2)$, 
then  no Legendrian surgery in $(L(p_2, q_2), \xi_2)$ can produce $(L(p_1, q_1), \xi_1)$.
\end{cor}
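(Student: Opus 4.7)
The plan is to derive Corollary~\ref{back} by a direct composition argument from Theorem~\ref{self}. Suppose for contradiction that both surgeries in the statement exist: a Legendrian surgery on a link $L \subset (L(p_1, q_1), \xi_1)$ producing $(L(p_2, q_2), \xi_2)$, and a Legendrian surgery on a link $L' \subset (L(p_2, q_2), \xi_2)$ producing $(L(p_1, q_1), \xi_1)$. The goal will be to realize the concatenation as a single Legendrian surgery from $(L(p_1, q_1), \xi_1)$ to itself, which is forbidden by Theorem~\ref{self}.

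The key observation is that Legendrian surgeries compose: after performing the first surgery and landing in $(L(p_2, q_2), \xi_2)$, the Legendrian link $L'$ lives in the new contact manifold, so performing the second surgery on $L'$ yields a 2-handle cobordism from $(L(p_1, q_1), \xi_1)$ to $(L(p_1, q_1), \xi_1)$ consisting entirely of handles attached along Legendrian knots. Equivalently, the resulting manifold is obtained from the original $(L(p_1, q_1), \xi_1)$ by Legendrian surgery on the link $L \cup L'$, where one views $L'$ as sitting inside $(L(p_1, q_1), \xi_1)$ away from the surgery region for $L$ after the composition is set up correctly. This is a direct contradiction to Theorem~\ref{self}.

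The main thing to be careful about is the set-theoretic realization of the concatenated surgery as a Legendrian surgery on a single link in $(L(p_1, q_1), \xi_1)$, but this is standard: a Stein cobordism built from 2-handles, composed with another such cobordism, is again a Stein cobordism built from 2-handles, and on the contact topology side the combined cobordism is induced by Legendrian surgery on the disjoint union of the two links viewed in the initial manifold. No step here requires serious work beyond invoking this compatibility, so I do not anticipate a genuine obstacle — the corollary should essentially be a one-paragraph deduction.
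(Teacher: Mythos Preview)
Your argument is correct and is exactly what the paper intends: the corollary is stated there as ``an obvious corollary of Theorem~\ref{self}'' with no written proof, and the obvious deduction is precisely the composition-of-Stein-cobordisms argument you give. The only minor wording issue is that $L'$ does not literally sit in $(L(p_1,q_1),\xi_1)$; the clean way to phrase it is that the two Stein $2$-handle cobordisms stack to a single Stein $2$-handle cobordism from $(L(p_1,q_1),\xi_1)$ to itself, which is the same thing as a Legendrian link surgery.
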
 

%%%%%%%%%%%%%%%%%%%%%%%%%%%%%%%%%
\section{Obstructions from Heegaard Floer contact invariants} \label{hfloer} 

In this section, we  use the Heegaard Floer contact  invariants \cite{contOS} to find obstructions to existence of a Legendrian knot surgery between two tight lens spaces. We work with $\ZZ/2\ZZ$ coefficients throughout.
Recall that the invariant $c(\xi)$ is an element of $\HF(-Y)$ for a contact 3-manifold $(Y, \xi)$; if $\xi$ is Stein-fillable, $c(\xi)\neq 0$.
(Thus the contact invariant is non-zero for all tight lens spaces.) 

If $(Y_2, \xi_2)$ is obtained from $(Y_1, \xi_1)$ by  Legendrian surgery,
the surgery cobordism carries a canonical $\Spinc$ structure $\t$ induced by the Stein structure on the 2-handle. 
We can consider the map   $F_{\t}:\HF(-Y_2, \t|_{-Y_2} ) \to \HF(-Y_1, \t|_{-Y_2})$ induced by the $\Spinc$ cobordism, 
as well as the map  $F:\HF(-Y_2) \to \HF(-Y_1)$ obtained by summing over all $\Spinc$ structures on $W$. Then by  \cite{contOS,Gh,LS5},
$$F(c(\xi_2))= F_{\t}(c(\xi_2))=c(\xi_1).$$  

Thus, we can show that there is no 
Legendrian knot surgery from Stein-fillable $(Y_1, \xi_1)$ to $(Y_2, \xi_2)$ if we prove that any single 2-handle attachment producing  
$-Y_1$ from $-Y_2$ induces a vanishing map $F$ on Heegaard Floer homology. We will follow this strategy to prove  Theorem \ref{HFlo}. 

To prove that certain maps associated to surgeries vanish, we will make use of the surgery exact triangle.
Recall that the exact triangle relates the Heegaard Floer homologies of a 3-manifold $Y$ and the following surgeries.
For a choice of longitude $l$ for a knot $K \subset Y$, 
let $Y_l$ be the manifold obtained by the surgery that attaches the meridian of the surgery torus  to $l$; for $k \in \ZZ$, let 
$Y_{l+k\,m}$ the manifold obtained by sending the meridian  to $l+k\, m$. Then the homologies of the three manifolds $Y$, $Y_l$, $Y_{m+l}$ 
fit into an exact triangle: 
$$
\dots \to \HF(Y) \to \HF (Y_l) \to \HF (Y_{m+l}) \to \HF(Y) \to \dots
$$

We are interested in lens spaces, which have the simplest possible Heegaard Floer homology; indeed, if $Y$ is a lens space, 
$
\HF(Y,\s) = \ZZ/2\ZZ 
$
in each $\Spinc$ structure $\s$ on $Y$. ( A rational homology sphere with this property is called an $L$-space.)
Recall the following
\begin{lemma}\label{Lsp} \cite{OSlens} If $Y$, $Y_l$,  and $Y_{m+l}$ are members of a surgery triple such that
$Y$ and $Y_l$ are $L$-spaces, and 
\begin{equation}
\label{h+h} 
|H_1(Y_{l+m})|= |H_1(Y_{l})| + |H_1(Y)|,
\end{equation}
then $Y_{m+l}$ is also an $L$-space. Moreover, the map
$
F: \HF(Y) \to \HF (Y_l),
$      
induced by the surgery cobordism, is identically zero.
\end{lemma}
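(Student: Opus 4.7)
The plan is to extract both conclusions from a rank-counting argument in the surgery exact triangle, using the standard lower bound $\rk \HF(N) \geq |H_1(N)|$ for rational homology spheres (with equality precisely for $L$-spaces).

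Write the exact triangle schematically as
$$
\HF(Y) \xrightarrow{F} \HF(Y_l) \xrightarrow{G} \HF(Y_{l+m}) \xrightarrow{H} \HF(Y).
$$
Exactness at each vertex gives the three equalities
$\rk\HF(Y) = \rk(\on{im} H) + \rk(\on{im} F)$,
$\rk\HF(Y_l) = \rk(\on{im} F) + \rk(\on{im} G)$,
$\rk\HF(Y_{l+m}) = \rk(\on{im} G) + \rk(\on{im} H)$.
Subadditivity then yields
$\rk\HF(Y_{l+m}) \leq \rk\HF(Y) + \rk\HF(Y_l)$.
Since $Y$ and $Y_l$ are $L$-spaces, the right-hand side equals $|H_1(Y)|+|H_1(Y_l)|$, which by hypothesis \eqref{h+h} equals $|H_1(Y_{l+m})|$. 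Combining with the general lower bound $\rk\HF(Y_{l+m}) \geq |H_1(Y_{l+m})|$ forces equality throughout; in particular $Y_{l+m}$ is an $L$-space.

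For the vanishing of $F$, I would add the three rank equalities above to obtain
$$\rk\HF(Y) + \rk\HF(Y_l) + \rk\HF(Y_{l+m}) = 2\bigl(\rk(\on{im} F) + \rk(\on{im} G) + \rk(\on{im} H)\bigr),$$
which combined with the equality case already established (so $\rk\HF(Y_{l+m}) = \rk\HF(Y) + \rk\HF(Y_l)$) forces $\rk(\on{im} F) = 0$. Hence $F \equiv 0$, as claimed.

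The argument is essentially formal once the two standard ingredients are in place: the subadditivity in the exact triangle, and the $L$-space lower bound on $\rk\HF$. The only mild subtlety is that the triangle is a triangle of $\ZZ/2\ZZ$-vector spaces totalled over all $\Spinc$ structures, so the rank counts and the inequality $\rk\HF \geq |H_1|$ must be applied to the total homology; no $\Spinc$ refinement is needed for the statement given. Because the paper cites \cite{OSlens} for this lemma and uses it only as a black box, I would keep the write-up brief and emphasize the two-line subadditivity identity, which is the main mechanism behind both the $L$-space conclusion and the vanishing of $F$.
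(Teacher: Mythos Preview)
Your argument is correct and is essentially the standard proof of this fact. Note that the paper does not supply its own proof of this lemma: it is quoted from \cite{OSlens} and used as a black box, so there is nothing in the paper to compare your write-up against beyond the citation. One minor streamlining: from your three rank identities one gets directly
\[
\rk\HF(Y)+\rk\HF(Y_l)=\rk\HF(Y_{l+m})+2\,\rk(\on{im} F),
\]
so the equality $\rk\HF(Y_{l+m})=\rk\HF(Y)+\rk\HF(Y_l)$ forces $\on{im} F=0$ without the extra summation step. It is also worth remarking (since you apply the bound $\rk\HF\geq |H_1|$) that the hypothesis \eqref{h+h} with $|H_1(Y)|,|H_1(Y_l)|>0$ guarantees $Y_{l+m}$ is a rational homology sphere, so that bound is available.
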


\begin{proof}[Proof of Theorem \ref{HFlo}] Suppose that the space $Y_2= L(p_2, q_2)$ is obtained by integral surgery 
on a knot in $Y_1=L(p_1, q_1)$. The surgery cobordism from $Y_1$ to $Y_2$, consisting of one 2-handle, can be considered as cobordism 
from $-Y_2$ to $-Y_1$; equivalently, we can say that $-Y_1= -L(p_1, q_1)$ is obtained by integral surgery on 
a knot $K \subset -Y_2=-L(p_2, q_2)$. Let $l$ be the choice of longitude for $K$ corresponding to the surgery framing.
Since $p_1$ and $p_2$ are assumed coprime, $[K]$ generates $H_1(-Y_2)$. 

Let $C$ be the complement of a tubular neighborhood of $K$ in $-Y_2$.
The homology  $H_1(\d C)$ of its torus boundary is generated by the chosen longitude $l$ and the meridian $m$ of the knot $K$. 
There exists an essential simple closed curve in $\d C$ that bounds in $C$. After an appropriate choice of orientations, such curve belongs to a (uniquely determined) homology class of the form  $a\,m + p_2\, l \in H_1(\d C)$. The quantity $a \mod p_2$ 
is the self-linking number $K\cdot K$, which is an invariant of $K$. We will, however, consider $a$ as an integer; 
note that $a$ is well-defined once the longitude $l$ is fixed. Observe also that $a$ and $p_2$ are coprime.      

We perform surgery on $K\subset -Y_2$ and consider the surgery triple $Y=-Y_2$, $Y_l=-Y_1$, and $Y_{l+m}$.  
The integer $a$ determines the size of homology of integral surgeries on $K$. Indeed,
we have $|H_1(Y_{l+k\,m})|= |-a +k p_2|$, where we assume that $a$ is computed with respect to the chosen 
longitude $l$. (This is a standard calculation; see e.g. \cite{Ras} for a quick review with proofs.) 
Thus,  $|H_1(Y_l)|=|-a|$ and $|H_1(Y_{l+m})|= |-a +p_2|$.

In our setup, $Y_l=-L(p_1, q_1)$, so $a=\pm p_1$. We next determine the sign of $a$. Indeed, recall that 
for a knot $K \subset -L(p_2, q_2)$ the self-linking number $K\cdot K$ can take values $k^2 q' \mod p_2$, 
where $q'$ is the multiplicative inverse of $q_2$ modulo $p_2$, and $k$ is an integer. Then, $a q_2$ is a square modulo 
$p_2$. The hypotheses of Theorem \ref{HFlo} imply that $-p_1 q_2$ is a square $\mod p_2$ but $p_1 q_2$ is not; thus, 
$a=-p_1$.    

Since $p_1, p_2>0$, we now have $|H_1(Y_{l+m})|=  p_1+p_2$, $|H_1(Y)|=p_2$, $|H_1(Y_l)|=p_1$. So
the condition (\ref{h+h}) is satified, and Lemma \ref{Lsp} implies that the map $F:\HF(-L(p_2, q_2)) \to \HF(-L(p_1, q_1))$, 
induced by the cobordism, is trivial. As explained in the beginning of the section, it follows that a tight $L(p_2, q_2)$ 
cannot be obtained from a tight $L(p_1, q_1)$ by a Legendrian surgery. \end{proof}
 
\begin{remark} When $p_2$ is prime, the hypotheses of Theorem \ref{HFlo} are equivalent to requiring that
$p_2$ be congruent to $3 \mod 4$, and exactly one of the numbers $p_1$, $q_2$ 
be a square $\mod p_2$.
\end{remark}

\begin{example} For the lens spaces $Y_1=L(3r-1, q)$ and $Y_2=L(3,1)$ the hypotheses of the theorem hold for any $r,q>0$, thus none of the two tight contact structures $\zeta_1, \zeta_2$ on $L(3,1)$ can be obtained from any tight contact structure on $L(3r-1, q)$ by a Legendrian surgery. Note, however, 
that there is an obvious integral surgery between  $Y_1=L(3r-1, r)$ and $L(3,1)$.
In fact, by classification of tight contact structures \cite{Ho}, every tight contact structure on $L(3r-1,r)$ can be obtained by a Legendrian surgery on either  $(L(3,1), \zeta_1)$ or $(L(3,1), \zeta_2)$. If a tight $(L(3r-1,r), \xi$) is obtained from a tight $(L(3,1), \zeta_1)$, 
Corollary \ref{back} rules out a Legendrian link surgery from $(L(3r-1,r), \xi$) to $(L(3,1), \zeta_1)$. Theorem \ref{HFlo} rules out a Legendrian 
knot surgery from $(L(3r-1,r), \xi$) to both $(L(3,1), \xi_1)$ and $(L(3,1), \xi_2)$.

\end{example}

%%%%%%%%%%%%%%%%%%%%%%%%%%%%%%%%%%%%%%%%%%%%%%%%%%%%%%%%%%%%%%%%%%%%%%%%%5

\section{Obstructions from the Thurston--Bennequin framing and known knot surgery results} \label{tb}

The Legendrian surgery coefficient is one less than the Thurston--Bennequin framing of the Legendrian knot. Thus one can 
try to obtain obstructions to existence of Legendrian knot surgeries between two manifolds by enumerating knots with given integral 
surgeries, and then using bounds on the Thurston--Bennequin framing of these knots. While  both of these steps are difficult in general, some 
special cases follow as immediate consequences of known results. We collect them in this section; it is interesting 
to compare these with the results from preceding sections. (We prove no new integral surgery results
or Thurston--Bennequin bounds here.)

Here and in section \ref{otwist}, we will use (+1) contact surgeries along with Legendrian surgeries. (See \cite{DGS} for a detailed treatment
of the material reviewed in this paragraph.)  Recall that a $+1$ surgery is an operation that
can be thought of as inverse to Legendrian surgery; more precisely, if $K$ is a Legendrian knot and $K'$ is its Legendrian push-off, then a Legendrian surgery on $K$ and a $+1$ contact surgery on $K'$ cancel each other. Every contact 3-manifold can be obtained from $(S^3, \xi_{std})$
by Legendrian and +1 contact surgeries on components of some link.  Unlike Legendrian surgery, $+1$ surgery does not 
always preserve Stein fillability or other similar properties of contact structures, and may produce both tight and overtwisted results. 
For example, +1 surgery on an unknot with $\tb=-2$ (Figure \ref{shark}) in the standard tight $S^3$ yields an overtwisted $S^3$. We will 
often encounter this particular overtwisted $S^3$, and will refer to it as the surgery on the ``shark'' (alluding to the shape of the 
stabilized unknot).  A contact surgery diagram for a contact structure
 allows to compute its 3-dimensional homotopy invariant $d_3$  via the formula 
\begin{equation}\label{d3}
d_3 (\xi) = \frac{c_1(\s)^2-2\chi(X)-3 \sign(X)+2}4 +m,  
\end{equation}
where $X$ is a 4-manifold bounded by $Y$ and obtained by adding $2$-handles to $B^4$ as dictated by the surgery 
diagram, $\s$ is the corresponding $\Spinc$ structure on $X$, and $m$ is the number of $(+1)$-surgeries in the diagram.
The $\Spinc$ structure $\s$ arises from an almost-complex structure defined in the complement of a finite set in $X$, and 
\begin{equation} \label{c1rot}
\langle c_1(\s), h \rangle = \rot (K)
\end{equation}
where $h$ is a homology generator of $X$ corresponding to the handle attachment 
along an (oriented) Legendrian knot $K$. 
  
(Here we assume that $c_1(\xi)$ is torsion; recall  that in this case the invariant $d_3$ and the  $\Spinc$ structure on $Y$ induced by $\xi$ 
determine the homotopy class of the plane field $\xi$.) For instance, for the surgery on the shark from Figure \ref{shark} we get $d_3=\frac{1}{2}$.

\begin{figure}[htb] 
\includegraphics[scale=0.8]{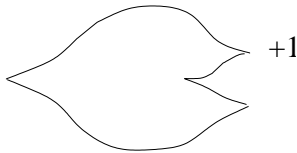}
\caption{The +1 surgery on the shark gives an overtwisted $S^3$ with $d_3=\frac{1}{2}$.}\label{shark} 
\end{figure}  
 
 We first consider surgeries on the unknot in  a tight or overtwisted $S^3$. 
In the tight $(S^3, \xi_{std})$, Legendrian surgeries are restricted by the fact 
that $\tb(\unknot)\leq -1$. If we want to get a tight result by a surgery in an overtwisted $S^3$, we need to consider knots with tight 
complements (i.e. {\em non-loose} knots). Non-loose unknots in overtwisted  $S^3$ were classified in \cite{EF}; they only exist in 
$(S^3, \xi_o)$ with $d_3(\xi_o)=\frac12$, can only have values of $(\tb, \rot)$ equal to $(n, \pm(n-1))$ with $n>0$, and are classified 
by $\tb$ and $\rot$. An explicit construction of the non-loose knots in $(S^3, \xi_o)$  was first given in \cite{Dym}. We find it useful to give a contact surgery description of these unknots.

\begin{figure}[htb] 
\includegraphics[scale=0.8]{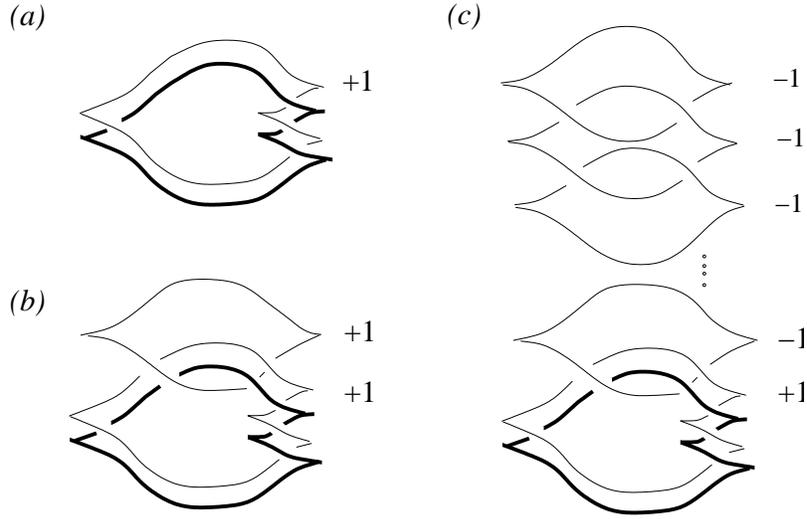}
\caption{Non-loose unknots with (a) $\tb=+2$, (b) $\tb=+1$, (c) $\tb\geq 3$.}\label{unknots} 
\end{figure}  

\begin{prop} The contact surgery diagrams shown on Figure \ref{unknots} represent $(S^3, \xi_o)$. The Legendrian knots (shown by thicker lines)
represent non-loose unknots with $\tb=n$ and $\rot=\pm(n-1)$ for all $n>0$.   
\end{prop}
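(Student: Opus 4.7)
The plan is to verify, for each of the three diagrams in Figure \ref{unknots}, three claims: (i) the ambient contact manifold is $(S^3, \xi_o)$, the overtwisted $S^3$ with $d_3 = \tfrac12$; (ii) the thick Legendrian curve is smoothly an unknot whose Thurston--Bennequin and rotation numbers in the new contact structure are $n$ and $\pm(n-1)$; and (iii) this unknot is non-loose. Here $n=2$ for (a), $n=1$ for (b), and (c) is parametrized with $n\ge 3$.

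For (i), I would first convert each contact surgery diagram to a topological Dehn surgery description via the standard rules (Legendrian surgery on $L$ becomes $(\tb(L)-1)$-framed surgery, $+1$-contact surgery becomes $(\tb(L)+1)$-framed surgery), then reduce the resulting link to the empty diagram by Kirby moves---handle slides and cancellation of canceling pairs---confirming that the underlying smooth manifold is $S^3$. Next, I would apply formula (\ref{d3}) to the 4-manifold $X$ obtained by attaching these 2-handles, with the $\Spinc$ structure determined by the rotation numbers via (\ref{c1rot}); the arithmetic should give $d_3 = \tfrac12$. Since $d_3(\xi_{std}) = -\tfrac12$ on $S^3$, this rules out the tight structure, and by Eliashberg's classification of overtwisted contact structures in a fixed homotopy class, the result must be $\xi_o$.

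For (ii), I would track the thick curve through the Kirby reduction of (i); it should appear as a standard unknot in $S^3$. To read off $\tb$ and $\rot$ in the new contact manifold, I would compare the original contact framing $\tb(K)$ against the Seifert framing in the new $S^3$, and likewise compare the trivialization of $\xi$ along $K$ against the null-homology trivialization induced by a Seifert disk in the new $S^3$. Both comparisons involve linking-number contributions from the surgered components (each $+1$-surgery on $L$ shifts the framing by $\on{lk}(K,L)$ and the rotation number by a multiple of $\rot(L)$), and the bookkeeping in each of the three diagrams should give $(\tb,\rot) = (n, \pm(n-1))$.

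The principal obstacle is (iii). I would approach non-looseness by performing a Legendrian surgery on the thick curve in each diagram and showing the result is Stein fillable: tightness of the filled manifold forces tightness of the complement of the thick curve (which is unchanged by attaching the $2$-handle), i.e.\ non-looseness. The most efficient route uses the cancellation of Legendrian surgery against $+1$-contact surgery on a Legendrian push-off: if the thick curve is contact isotopic to a push-off of one of the $+1$-surgered Legendrians in the diagram, then Legendrian surgery on it cancels that $+1$-surgery, leaving a pure Legendrian surgery diagram on $(S^3, \xi_{std})$, which is automatically Stein fillable by \cite{E1}. Identifying this push-off structure in each of the three diagrams is the geometric essence of the proposition.
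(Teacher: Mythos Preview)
Your proposal is correct and follows essentially the same route as the paper: identify the ambient manifold as $(S^3, \xi_o)$ via Kirby calculus and the $d_3$ formula, establish non-looseness by observing that Legendrian surgery on the thick curve cancels the $+1$-surgery on the shark (yielding a Stein-fillable, hence tight, result into which the original knot complement embeds), and compute $\tb$ by tracking the contact framing through the Kirby moves. The only minor divergence is your plan to compute $\rot$ directly from the linking data; the paper instead relies on the Eliashberg--Fraser classification stated just before the proposition, which forces $\rot = \pm(n-1)$ once non-looseness and $\tb = n$ are known, so no separate $\rot$ computation is carried out.
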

\begin{figure}[htb] 
\includegraphics[scale=0.915]{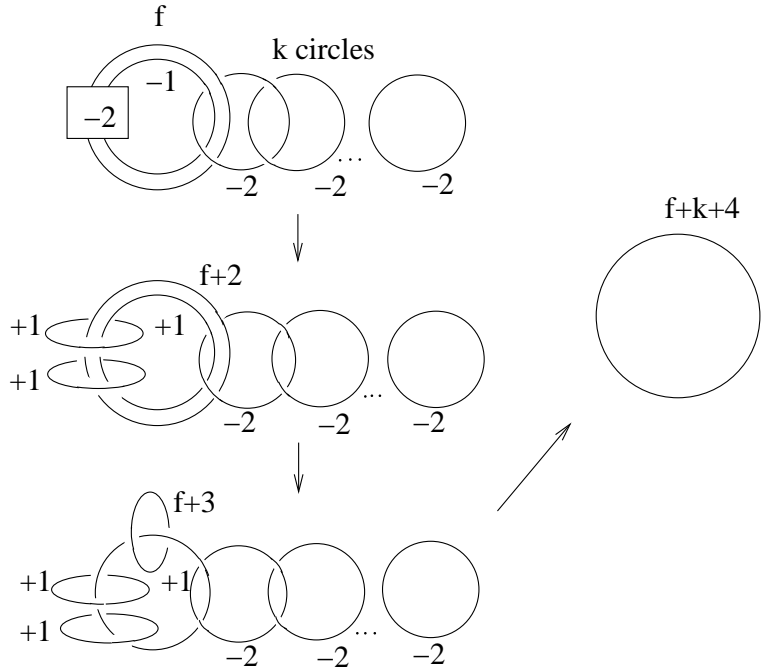}
\caption{Computing the framings. Unlike Figure \ref{unknots}, the numbers indicate Seifert framings, not those relative to the contact structure.}\label{kirby} 
\end{figure}  

\begin{proof} First, observe that the manifold shown is $S^3$, the contact structure is overtwisted and has $d_3=+\frac12$, 
thus we have $(S^3, \xi_o)$. Legendrian surgery on each thickly drawn knot cancels the +1 contact surgery on the shark and produces 
a tight contact manifold; it follows that all of these knots are non-loose. We compute the Thurston--Bennequin number:
 if we ignore the contact surgeries for  $(S^3, \xi_o)$ (i.e. consider the thick knots as living in $S^3$),
the contact framing on each of these unknots is 2 less than the Seifert framing from the disks they bound in  $(S^3, \xi_{std})$ (in other words,
$\tb=-2$). A few Kirby moves (see Figure \ref{kirby} for the case of non-loose knots with $\tb\geq 3$) show that  
the framing increases in the surgered manifold, so that the unknot in the diagram with $k$ Legendrian surgeries on Figure \ref{unknots}(c)
has $\tb=-2+k+4=k+2$. Cases (a) and (b)  on Figure \ref{unknots} are similar.  

\end{proof}

\begin{prop}  If $p>2$, a tight $L(p,1)$ cannot be obtained from an overtwisted $S^3$ by 
a surgery on a Legendrian unknot. ${\RR} \text{P}^3$ with its unique tight contact structure 
$\xi_t$ can be obtained from both tight and overtwisted $S^3$.
 \end{prop}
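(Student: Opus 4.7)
The plan is to combine the classification of non-loose Legendrian unknots from the preceding proposition with the elementary observation that Legendrian surgery on a loose knot yields an overtwisted manifold (the overtwisted disk in the knot complement survives the surgery untouched). So to obtain any tight contact manifold from an overtwisted $S^3$ via a Legendrian surgery on an unknot, the unknot must be non-loose, and Figure~\ref{unknots} then dictates exactly which topological outcomes are possible.

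For the first statement, the obstruction is purely topological. The preceding proposition forces any non-loose unknot $K\subset(S^3,\xi_o)$ to satisfy $d_3(\xi_o)=1/2$ and $\tb(K)=n$ with $n>0$. Legendrian surgery on $K$ therefore has topological surgery coefficient $n-1\geq 0$ and yields $(n-1)$-surgery on the smooth unknot in $S^3$. Using the paper's convention that $L(p,q)$ denotes $-p/q$-surgery on the unknot, this produces $S^2\times S^1$ for $n=1$, $S^3$ for $n=2$, and the lens space $L(n-1,n-2)$ for $n\geq 3$. For $p>2$ the oriented manifolds $L(p,p-1)$ and $L(p,1)$ are distinct (one needs $-1\equiv\pm 1\pmod p$, forcing $p\leq 2$), so no such surgery can produce $L(p,1)$.

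For the second statement, I would exhibit $({\RR}\text{P}^3,\xi_t)=(L(2,1),\xi_t)$ on each side. From $(S^3,\xi_{std})$, Legendrian surgery on the $\tb=-1$ unknot is the $-2$-surgery on the smooth unknot; it bounds the Stein filling $D_{-2}$ and so produces a Stein fillable contact structure on $L(2,1)$, which by uniqueness of the tight contact structure on $L(2,1)$ must be $\xi_t$. From an overtwisted $S^3$, the $n=3$ case of the preceding proposition gives a non-loose unknot with $\tb=3$ inside $(S^3,\xi_o)$; Legendrian surgery on it is $+2$-surgery on the smooth unknot, again yielding $L(2,1)={\RR}\text{P}^3$. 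By the cancellation argument in the proof of that proposition, this Legendrian surgery kills the $+1$ contact surgery on the shark and leaves behind a purely Legendrian surgery diagram in $(S^3,\xi_{std})$, so the outcome is Stein fillable and must be $\xi_t$.

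The only step requiring real care is the orientation bookkeeping: identifying $+(n-1)$-surgery on the smooth unknot with $L(n-1,n-2)$ rather than $L(n-1,1)$ in the paper's convention. This is precisely what makes the hypothesis $p>2$ necessary, since $L(p,p-1)$ and $L(p,1)$ agree only for $p\leq 2$, which is also why $p=2$ (i.e.\ ${\RR}\text{P}^3$) escapes the obstruction and admits the construction described in the second part.
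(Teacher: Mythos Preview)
Your argument is correct and follows essentially the same approach as the paper. The paper's proof of the first part is a one-line remark (``follows immediately from examination of the Thurston--Bennequin numbers of non-loose knots''), and what you have written is precisely the unpacking of that remark: non-loose unknots force $\tb>0$, hence nonnegative surgery coefficient, hence $L(n-1,n-2)$ rather than $L(p,1)$ for $p>2$. For the second part your constructions (the $\tb=-1$ unknot in $(S^3,\xi_{std})$ and the $\tb=+3$ non-loose unknot in $(S^3,\xi_o)$) match the paper's exactly, and your justification that the overtwisted-side surgery yields a tight result via the cancellation of the shark is the same mechanism the paper invokes. One small point: your parenthetical ``one needs $-1\equiv\pm 1\pmod p$'' is slightly misstated for the \emph{oriented} classification (the condition is $-1\equiv 1^{\pm 1}\equiv 1\pmod p$, both branches giving $p\mid 2$); the conclusion $p\leq 2$ is unaffected. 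The paper also appends a remark, via [KMOS], that these constructions exhaust all possibilities for producing $(\RR\text{P}^3,\xi_t)$, but that is extra information not required for the proposition as stated.
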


\begin{proof} The first part follows immediately from examination of the Thurston--Bennequin numbers of non-loose knots. 
As for the second part, a surgery on the unknot with $\tb=-1$ and $\rot=0$ produces $(\RR \text{P}^3, \xi_{t})$ from $(S^3, \xi_{std})$. 
To get $(\RR \text{P}^3, \xi_{t})$
from an overtwisted sphere, do surgery on one of the two non-loose unknots with $\tb= +3$ in $(S^3, \xi_o)$. Note that these  surgeries exhaust all 
possibilities: by \cite{KMOS} the only surgery in $S^3$ that produces  $\RR \text{P}^3$ is  $\pm 2$ surgery on the unknot. The unknot then has to be non-loose 
and have $\tb=-1$ or $+3$, which means that the contact structure on $S^3$ must be $\xi_{std}$ or $\xi_{o}$.    
\end{proof}

We also have
\begin{prop} \label{-L4n+3} A tight $-L(4n+3, 4)$ and a tight $S^3$ cannot be obtained from each other by a single Legendrian surgery (in any direction).
In addition, a tight $S^3$ cannot be obtained from an overtwisted  $-L(4n+3, 4)$.  
\end{prop}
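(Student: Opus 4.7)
The proposition comprises three claims, which I would address in order.

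For \emph{tight $S^3 \to$ tight $-L(4n+3,4)$}, I would verify the hypotheses of Theorem~\ref{HFlo}. Writing $-L(4n+3,4) \cong L(4n+3,4n-1)$ as oriented manifolds, I set $(p_1,q_1)=(1,0)$ and $(p_2,q_2)=(4n+3,4n-1)$; coprimality is trivial; the computation $-p_1 q_2 \equiv -(4n-1) \equiv 4 \equiv 2^2 \pmod{4n+3}$ exhibits $-p_1 q_2$ as a square, while $-1$ is not a square modulo $4n+3$ because $4n+3 \equiv 3 \pmod 4$ forces some prime divisor $\equiv 3 \pmod 4$. Theorem~\ref{HFlo} then applies. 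The direction \emph{tight $-L(4n+3,4) \to$ tight $S^3$} is immediate from the Proposition in Section~\ref{fillings} asserting that tight $S^3$ can be the target of a Legendrian surgery only from tight $S^3$ itself.

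For the overtwisted-to-tight claim I would use the standard cancellation identity between Legendrian surgery and $(+1)$-contact surgery on a Legendrian push-off: the hypothesized Legendrian surgery exists if and only if some $(+1)$-contact surgery on a Legendrian knot $\mathcal{K} \subset (S^3,\xi_{std})$ produces an overtwisted contact structure on $-L(4n+3,4)$. The smooth surgery coefficient is $\tb(\mathcal{K})+1$, and since $|H_1(-L(4n+3,4))| = 4n+3$ this integer must equal $\pm(4n+3)$. The same square-class argument that underlies Theorem~\ref{HFlo}, applied now to an integer surgery on $K \subset S^3$ producing the (oriented) smooth manifold $-L(4n+3,4)$, forces the compatible choice of sign: $4$ is a square modulo $4n+3$ whereas $-4$ is not (since $-1$ is not), so only $\tb(\mathcal{K})+1 = +(4n+3)$ is possible, giving $\tb(\mathcal{K}) = 4n+2$. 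Combining Bennequin's inequality $\tb(\mathcal{K}) \le 2 g_s(\mathcal{K}) - 1$ in tight $(S^3,\xi_{std})$ with the classical genus bound $g(K) \le (p-1)/2$ for knots in $S^3$ admitting integer lens space surgery at slope $p$, I would arrive at the contradiction $2n+2 \le g_s(\mathcal{K}) \le 2n+1$.

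The main obstacle I anticipate is the careful bookkeeping of orientation conventions when translating between the various lens-space notations in the sign-analysis of the linking-form obstruction, and invoking the sharp genus bound for lens space surgery (rather than merely the weaker $g \le (p+1)/2$ bound that holds for general $L$-space surgery).
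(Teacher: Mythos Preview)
Your arguments for the two tight-to-tight directions are correct, though they take different routes from the paper. For tight $S^3 \to$ tight $-L(4n+3,4)$ you invoke Theorem~\ref{HFlo}, and for the reverse direction you invoke the Stein-fillings obstruction; both are valid. The paper instead handles all three claims with a single uniform argument based on a result of Rasmussen~\cite{Ras}: the only integral surgery on a knot in $S^3$ yielding $-L(4n+3,4)$ is $(4n+3)$-surgery on the positive torus knot $T_{2n+1,2}$, whose maximal Thurston--Bennequin number in $(S^3,\xi_{std})$ is $2n-1$. Since any Legendrian or $(+1)$-contact surgery on a Legendrian representative of this knot in tight $S^3$ has smooth coefficient at most $2n$, the required coefficient $4n+3$ is unattainable, disposing of all three directions at once (the second and third via the same reduction to $(+1)$-surgery in tight $S^3$ that you use).

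Your argument for the overtwisted-to-tight direction, however, has a genuine gap. After correctly reducing to a $(+1)$-contact surgery in $(S^3,\xi_{std})$ and using the linking-form square-class to fix the coefficient as $+(4n+3)$, you obtain $\tb(\mathcal K)=4n+2$ and hence $g_s(\mathcal K)\ge 2n+2$ by Bennequin. You then appeal to a bound $g(K)\le (p-1)/2$ for knots with integral lens space surgery at slope $p$, giving $g\le 2n+1$ and a contradiction. But, as you yourself flag, only the weaker $L$-space bound $g\le (p+1)/2=2n+2$ is standard; with that bound you only get $g_s=g=2n+2$, which is no contradiction. The sharper inequality you want is not something you have cited or justified, and without it the argument does not close. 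The paper avoids this issue entirely: once~\cite{Ras} identifies the knot as $T_{2n+1,2}$, the explicit value $\tb_{\max}=2n-1$ finishes the job with no genus bound needed.
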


\begin{proof} By \cite{Ras}, the only integral surgery that produces $-L(4n+3, 4)$ from $S^3$ is the $4n+3$ surgery on the positive $(2n+1, 2)$
torus knot $T_{2n+1, 2}$. The maximal Thurston--Bennequin number of  $T_{2n+1, 2}$ in the tight $S^3$ equals to $2n-1$, thus no $\pm 1$ 
contact surgery on this knot can produce $-L(4n+3, 4)$.  
\end{proof}

\section{On overtwisted contact structures} \label{otwist}

A result from \cite{EH} asserts that a Stein cobordism from $(Y_1, \xi_1)$ to $(Y_2, \xi_2)$ exists whenever $(Y_1, \xi_1)$ is overtwisted.
We now examine the relation between the second Betti numbers of such Stein cobordisms
and smooth cobordisms between $Y_1$ and $Y_2$. When working with overtwisted contact structures, we will often use Eliashberg's classification result that says that two overtwisted contact structures are isotopic whenever they are homotopic 
as plane fields. 

\begin{prop} \label{ot-sur} If $Y_2$ can be obtained from $Y_1$ 
by an integral surgery on an $n$-component link, then an overtwisted contact 
structure on $Y_2$ can be obtained
from an overtwisted contact structure on $Y_1$ by a Legendrian surgery on a link with the same number of components.
Moreover, one of the contact structures ($\xi_1$ on $Y_1$ or $\xi_2$ on $Y_2$) can be fixed. 
\end{prop}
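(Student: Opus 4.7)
The plan is to realize, in both cases, the relevant Legendrian link inside an overtwisted contact structure whose overtwisted disk can be arranged disjoint from a tubular neighborhood of the link, so that the disk survives intact through the contact surgery and makes the resulting contact structure overtwisted. Since contact surgery is supported in an arbitrarily small neighborhood of the surgery link, an overtwisted disk lying in the complement remains an overtwisted disk after surgery; combined with the great flexibility of Legendrian representatives in overtwisted manifolds, this will handle both halves of the statement.

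Let $L = K_1 \cup \cdots \cup K_n \subset Y_1$ be the smooth surgery link with integer framings $k_1, \ldots, k_n$. For the case where $\xi_1$ is prescribed (and overtwisted), I would first isotope $L$ so it is disjoint from a small overtwisted disk of $\xi_1$, then realize $L$ as a Legendrian link with each $\tb(K_i) = k_i + 1$. In an overtwisted contact manifold any smooth knot type has loose Legendrian representatives with arbitrarily large Thurston--Bennequin number, so one can first produce a Legendrian representative of $L$ with $\tb$'s much larger than needed and then negatively stabilize each component down to $\tb(K_i) = k_i+1$, all inside a tubular neighborhood that remains disjoint from the overtwisted disk. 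Now $\tb(K_i) - 1 = k_i$, so a Legendrian surgery on $L$ recovers $Y_2$ smoothly, and the overtwisted disk survives in the surgered contact structure $\hat\xi_2$, which is therefore overtwisted.

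For the case where $\xi_2$ is prescribed (and overtwisted), I would use the reversed smooth surgery: there is a dual $n$-component smooth link $L^* \subset Y_2$ with framings $k^*_i$ whose surgery recovers $Y_1$. Applying the same realization argument in $(Y_2,\xi_2)$, Legendrian realize $L^*$ with $\tb(K^*_i) = k^*_i - 1$ and with an overtwisted disk in the complement of a tubular neighborhood of $L^*$. Performing $+1$ contact surgery on each component of $L^*$ produces $Y_1$ smoothly (since now $\tb(K^*_i) + 1 = k^*_i$), equipped with an overtwisted contact structure $\hat\xi_1$. Finally, the Legendrian pushoffs of the components of $L^*$ form an $n$-component Legendrian link in $(Y_1, \hat\xi_1)$; by the standard Ding--Geiges--Stipsicz cancellation (a $+1$ contact surgery on a Legendrian knot and a Legendrian surgery on its Legendrian pushoff cancel each other), a Legendrian surgery on this pushoff link undoes the $+1$ surgeries and returns precisely $(Y_2, \xi_2)$.

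The only substantive technical point is the flexibility lemma used repeatedly: in an overtwisted contact manifold, any smooth link can be isotoped to a Legendrian link with any prescribed Thurston--Bennequin numbers on its components, with an overtwisted disk kept disjoint from a tubular neighborhood of the link. This is standard but should be stated carefully, as everything else follows from the locality of contact surgery and the cancellation lemma. I do not foresee any deeper obstacle; in particular, no fine-tuning of rotation numbers or invocation of Eliashberg's classification of overtwisted structures is needed, because the target contact structure is produced on the nose by the construction rather than identified via its homotopy invariants.
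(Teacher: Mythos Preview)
Your proof is correct and follows the same structural outline as the paper's: exploit the flexibility of Thurston--Bennequin numbers in overtwisted manifolds to match the contact framing to the prescribed surgery framing, and for the $\xi_2$-fixed case perform $+1$ contact surgeries on the dual link in $(Y_2,\xi_2)$ and then cancel them by Legendrian surgeries on the pushoffs.

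The one noteworthy difference is in how overtwistedness of the surgered manifold is guaranteed. The paper arranges this by taking the connected sum of $\xi_1$ with an overtwisted $S^3$ having $d_3=-\frac12$; this leaves the homotopy type---hence, by Eliashberg's classification, the isotopy type---of $\xi_1$ unchanged while supplying an explicit overtwisted region and a concrete $tb$-raising mechanism (connect-summing the knot with a $\tb=+2$ unknot inside an $(S^3,\xi_o)$ summand). You instead keep a fixed overtwisted disk disjoint from a neighborhood of the link throughout, so that it survives the surgery untouched. Your route is a bit more elementary, since it sidesteps Eliashberg's classification and the $d_3$ bookkeeping entirely; the paper's route is more explicit about the $tb$-raising construction and dovetails with the contact-surgery-diagram calculus used elsewhere in that section.
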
 

\begin{proof} We first consider the case where $Y_2$ is obtained from $Y_1$ by a surgery on a single knot. 
Suppose that $K$ is a knot in $Y_1$ that has an integral surgery producing $Y_2$,  and  let $l$ be the choice of longitude of $K$ 
that corresponds to the surgery framing. More precisely, we think of $l \in H_1(\d (Y_1 - \nu K)) $ as the homology class of a curve 
on the torus such that $m \cdot l = 1$, where $m \in H_1(\d (Y_1 - \nu K))$ is the class of the meridian of the knot. 
(Here the torus $\d (Y_1 - \nu K)$
is oriented as the boundary of the tubular neighborhood $\nu K$). The surgery producing $Y_2$ is then given by attaching a solid torus to 
$Y_1 - \nu K$ as dictated by the framing $l$. Now in the contact manifold  
$(Y_1, \xi_1)$, isotope $K$  to make it Legendrian, 
and let $tb \in H_1(\d (Y_1 - \nu K))$ stand for the framing on $K$ induced by $\xi_1$.  
To perform Legendrian surgery, we need to have $l=tb-m$. If $l=tb-k\,m$ with $k>1$,
we can decrease the Thurston--Bennequin framing by stabilizing the knot. (To make sure that the result of the surgery will be overtwisted, 
we can connect-sum, away from the knot,  with an overtwisted $S^3$ with $d_3=-\frac12$. This won't change the homotopy, and thus the isotopy type of $\xi_1$.) 
If $l=tb+k\,m$ with $k\geq 0$, we will increase the Thurston--Bennequin number as shown on Figure \ref{overtwist}. As before we first take the connected sum of $\xi_1$ and an overtwisted  $S^3$ with $d_3=-\frac12$, where the latter  is  shown on Figure \ref{overtwist}.  
This $S^3$ is in turn the connected sum of  $(S^3, \xi_o)$ which is 
the $(+1)$ contact surgery on the shark, and an $S^3$ with $d_3=-\frac{3}2$ which is 
the contact surgery on the two-component link  shown in the figure. (See \cite{DGS} for the detailed calculation of the  homotopy invariants). 
Taking a connected sum of $K$ with an unknot with $\tb=+2$
that lives in $(S^3, \xi_{o})$ gives a knot with the Thurston--Bennequin framing   $tb(K)+3m$. We  can repeat this procedure (together with extra stabilizations if needed)
to get the required value of $tb$. (The fact that the Thurston--Bennequin number of a knot can be increased in overtwisted contact 
structures is well-known; in fact this can be achieved by taking connected sums with the boundary of the overtwisted disk, see \cite{E1}.  We 
 included a  contact surgery version of such construction for completeness of discussion.) 
  
Observe that in this process we did not change the isotopy type of $\xi_1$ (but have little control over the isotopy type of $\xi_2$). 
To obtain a given overtwisted contact structure $\xi_2$ on $Y_2$, we can  apply a similar procedure 
for $+1$ contact surgery on the knot dual to $K$ in $(Y_2, \xi_2)$. 

To prove the statement for links with multiple components, iterate the above argument.
\end{proof}
\begin{figure}[htb] 
\includegraphics[scale=0.8]{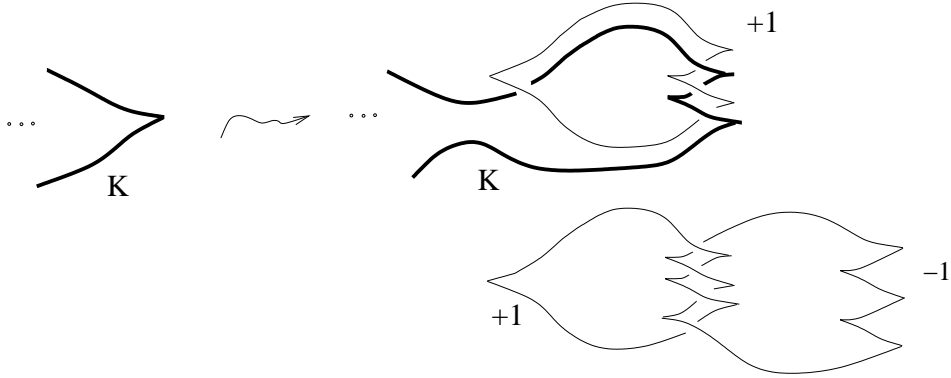}
\caption{Matching the contact and the surgery framings: increasing $tb$.}\label{overtwist} 
\end{figure}  

\begin{remark} Note that we cannot hope to fix both $\xi_1$ and $\xi_2$: suppose that $Y_2$ is obtained from $Y_2$ by a surgery on a knot, and observe that two arbitary homotopy classes of contact structures 
cannot always be connected by a single handle attachment. (Examples are easy to find on $S^3$.)  
\end{remark}

\begin{prop} \label{cando} Every overtwisted contact manifold  can be obtained from itself by a Legendrian surgery 
on an appropriate Legendrian unknot. 
\end{prop}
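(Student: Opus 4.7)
The plan is to exhibit a Legendrian unknot in $(Y,\xi)$ on which Legendrian surgery returns $(Y,\xi)$ up to isotopy. Fix an overtwisted disk $D \subset (Y,\xi)$ and set $L := \partial D$. Since $\xi$ is tangent to $D$ along $\partial D$, the contact framing agrees with the disk framing, so $\tb(L)=0$. The rotation number equals the Euler number of $\xi|_D$ trivialized along $\partial D$ by $TD$, which counts the single elliptic singularity of the characteristic foliation of $D$ and equals $\pm 1$. The Legendrian surgery coefficient is $\tb(L)-1 = -1$; since $L$ bounds a disk in $Y$, the smooth $(-1)$-surgery on $L$ is trivial, and the resulting 3-manifold is again $Y$. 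Let $\xi'$ denote the new contact structure.

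By Eliashberg's classification of overtwisted structures, it suffices to show $\xi'$ is overtwisted and homotopic to $\xi$ as oriented 2-plane fields. For overtwistedness, locate an overtwisted disk $D'$ in $(Y,\xi)$ disjoint from $L$ (for instance by bringing an auxiliary OT disk into a ball far from $L$, using the abundance of OT disks in an overtwisted structure); since Legendrian surgery modifies $\xi$ only in a neighborhood of $L$, the disk $D'$ survives in $\xi'$.

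For the homotopy class, let $W$ be the Stein cobordism attaching the 2-handle. Because $L$ is null-homotopic in $Y$, $W$ is homotopy equivalent to $Y \vee S^2$, so $H^2(W) \cong H^2(Y) \oplus \ZZ$ with the $\ZZ$-summand restricting trivially to either boundary component; this forces the induced $\Spinc$ structures $\s(\xi)$ and $\s(\xi')$ to agree under the natural identification of the two boundary copies of $Y$. For the $d_3$ invariant, applying the difference form of formula~(\ref{d3}) to the cobordism with $c_1(\s)^2 = -\rot(L)^2 = -1$, $\chi(W)=1$, and $\sign(W)=-1$ yields
\[
d_3(\xi') - d_3(\xi) \;=\; \frac{c_1(\s)^2 - 2\chi(W) - 3\sign(W)}{4} \;=\; \frac{-1-2+3}{4} \;=\; 0.
\]
Hence $\xi$ and $\xi'$ are homotopic as plane fields, and Eliashberg's theorem gives $\xi' \simeq \xi$.

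The main obstacle I expect is the overtwistedness step: producing an overtwisted disk in $(Y,\xi)$ disjoint from $L = \partial D$, equivalently verifying that $L$ is a loose knot. I anticipate this will follow from a short isotopy argument, but it is the geometric input that makes the whole construction work; the rest of the argument is a straightforward check of homotopy-theoretic invariants, made possible by the specific choice $\tb=0$, $\rot=\pm 1$.
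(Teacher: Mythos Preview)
Your argument is essentially correct and the gap you flag can indeed be filled. The looseness of $L=\partial D$ follows, for instance, from the Dymara inequality quoted later in the paper: a non-loose null-homologous unknot must satisfy $-|\tb|+|\rot|\le -1$, which fails for $(\tb,\rot)=(0,\pm1)$. Alternatively, one can simply observe that any overtwisted $(Y,\xi)$ is isotopic to $(Y,\xi)\#(S^3,\xi_{-1/2})$ and hence contains an overtwisted disk in a ball disjoint from any prescribed compact set.

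Your route differs from the paper's. The paper first treats $(S^3,\xi_o)$ by stabilizing a non-loose unknot with $\tb=+2$ down to $(\tb,\rot)=(0,1)$, and deduces overtwistedness of the surgered manifold from $d_3=\tfrac12\neq-\tfrac12$ (so it cannot be the tight $S^3$). For a general overtwisted $(Y,\xi)$ it then writes $\xi$ as a connected sum with an overtwisted $S^3$ and locates the unknot inside the $(S^3,\xi_o)$ summand; the surgery is thus local to a ball where the $S^3$ case already gives back $\xi_o$, and the untouched summands supply overtwistedness. Your approach is more direct and geometric---the overtwisted disk itself furnishes the required unknot---while the paper's has the advantage that the locality to a ball sidesteps any issue with the $d_3$ formula when $c_1(\xi)$ is not torsion (your global $d_3$ computation implicitly assumes this). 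Both produce the same Legendrian isotopy class of unknot and the same numerics $c_1^2=-1$, $\chi=1$, $\sigma=-1$.
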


\begin{proof} 
First consider $S^3$ with the overtwisted contact structure $\xi_o$ that has $d_3(\xi_o)=+\frac{1}{2}$. 
There is a  Legendrian unknot with $\tb=0$ and  $\rot= 1$: for example, we can stabilize the non-loose
unknot with $tb=+2$ and $\rot=-1$ to decrease $\tb$ and increase $\rot$.    
From  (\ref{d3}),  the Legendrian surgery on this unknot $U$ results 
in a contact structure homotopic to $\xi_o$ (and necessarily overtwisted, since $d_3=+\frac12)$; but then the resulting 
contact structure is isotopic to $\xi_o$. To find an unknot with  $\tb=0$ and  $\rot= -1$ in any other overtwisted
contact structure $\xi$, we represent $\xi$ as the connected sum of itself and the overtwisted contact structure on $S^3$ 
with $d_3=-\frac1{2}$ (as in Proposition \ref{ot-sur} and Figure \ref{overtwist}), and then find a copy of the required unknot in the $(S^3, \xi_o)$
part of the contact structure. 

\end{proof}

We now turn to the case where the convex end of the cobordism may be tight.

\begin{lemma} Suppose $(Y_2, \xi_2)$ can be obtained from $(Y_1, \xi_1)$ by a Legendrian surgery. Then there is an overtwisted contact 
structure $\xi'_2$ on $Y_2$ and a Legendrian surgery that produces $(Y_1, \xi_1)$ from $(Y_2, \xi'_2)$.
\end{lemma}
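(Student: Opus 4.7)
My plan is to realize the reverse of the given Legendrian surgery by passing to an overtwisted contact structure on $Y_2$ and exploiting the shark cancellation from Section~\ref{tb}. Let $K \subset (Y_1,\xi_1)$ be the Legendrian knot whose Legendrian surgery produces $(Y_2,\xi_2)$. By the standard cancellation of $(\pm 1)$-contact surgery pairs \cite{DGS}, the Legendrian push-off $K^* \subset (Y_2,\xi_2)$ has the property that $(+1)$-contact surgery on $K^*$ recovers $(Y_1,\xi_1)$. The content of the lemma is thus to re-express this $(+1)$-contact surgery as a Legendrian surgery starting from some overtwisted contact structure $\xi'_2$ on $Y_2$.

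First I would form $\xi'_2 := \xi_2 \# \xi_o$ on $Y_2 \cong Y_2 \# S^3$, where $(S^3,\xi_o)$ is the overtwisted sphere arising from $(+1)$-contact surgery on the shark (Figure~\ref{shark}); then $\xi'_2$ is overtwisted, and $K^*$ still lives in the $(Y_2,\xi_2)$ summand with its original contact framing. Inside the $(S^3,\xi_o)$ summand there is a non-loose Legendrian unknot with $\tb = +1$ (Figure~\ref{unknots}(b)). Following the idea in the proof of Proposition~\ref{ot-sur}, I would form a Legendrian knot $\tilde K$ in $(Y_2,\xi'_2)$, topologically isotopic to $K^*$ in $Y_2$, with $\tb(\tilde K) = \tb(K^*) + 2$, by Legendrian connected-summing $K^*$ with this non-loose unknot. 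The Legendrian surgery coefficient on $\tilde K$ is then $\tb(K^*) + 1$, matching the topological coefficient of the $(+1)$-contact surgery on $K^*$; so Legendrian surgery on $\tilde K$ topologically produces $Y_1$. The rotation number of $\tilde K$ can be adjusted by stabilizations to match the $d_3$-invariant of $\xi_1$ via formula~(\ref{d3}).

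The hard part will be verifying that the contact structure produced on $Y_1$ is isotopic to $\xi_1$, not merely homotopic. When $\xi_1$ is overtwisted, Eliashberg's classification finishes the argument once $d_3$ and the $\Spinc$ structure have been matched. When $\xi_1$ is tight, I would argue that the non-loose unknot component of $\tilde K$ together with the shark in $(S^3,\xi_o)$ forms a cancelling pair, converting that $(S^3,\xi_o)$ summand back into the standard tight $(S^3,\xi_{std})$; modulo this cancellation, the net effect of the Legendrian surgery on $\tilde K$ is the same as the $(+1)$-contact surgery on $K^*$ in $(Y_2,\xi_2)$, which by construction produces $(Y_1,\xi_1)$. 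Making this Kirby-style cancellation rigorous for Legendrian surgery on the connected-sum knot $\tilde K$ --- possibly by reformulating it as a two-component Legendrian link surgery where the cancellation is manifest through Ding--Geiges--Stipsicz contact surgery calculus --- is the main technical step.
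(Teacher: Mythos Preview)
Your approach is substantially more complicated than the paper's, and the gap you yourself flag is real and not easily closed.

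The paper's argument works on the $(Y_1,\xi_1)$ side rather than the $(Y_2,\xi_2)$ side, and this makes all the difficulty disappear. Stabilize $K$ twice in $(Y_1,\xi_1)$ to obtain $K''$ with $\tb(K'')=\tb(K)-2$. The topological surgery that produced $Y_2$ (at framing $\tb(K)-1=\tb(K'')+1$) is now a $(+1)$-contact surgery on $K''$; by Lisca--Stipsicz \cite{LS}, $(+1)$-surgery on a stabilized Legendrian knot is always overtwisted, so the resulting $(Y_2,\xi'_2)$ is overtwisted. The Legendrian push-off of $K''$ in $(Y_2,\xi'_2)$ then carries a Legendrian surgery that cancels the $(+1)$-surgery and returns exactly $(Y_1,\xi_1)$ --- no homotopy computation, no case split on whether $\xi_1$ is tight.

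Your route, by contrast, needs to identify the contact structure produced by Legendrian surgery on the \emph{single} connected-sum knot $\tilde K = K^*\# U$. There is no decomposition principle saying that Legendrian surgery on a Legendrian connect sum factors as surgeries on the summands, so the sentence ``the non-loose unknot component of $\tilde K$ together with the shark forms a cancelling pair'' has no content: $\tilde K$ has no unknot \emph{component}. Your proposed fix --- rewriting the surgery as a two-component link where Ding--Geiges--Stipsicz cancellation applies --- would no longer produce $(Y_1,\xi_1)$ by a single Legendrian knot surgery, which is exactly what the lemma (and its immediate Corollary) requires. A secondary issue: you propose adjusting $\rot(\tilde K)$ by stabilization to match $d_3(\xi_1)$, but stabilization lowers $\tb(\tilde K)$ and destroys the framing match you just arranged.

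In short, the key idea you are missing is to stabilize on the $Y_1$ side and invoke \cite{LS}; once you do that, the cancellation is the standard $(\pm1)$ pair and there is nothing further to check.
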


\begin{proof} Suppose that $K$ is a Legendrian knot in $(Y_1, \xi_1)$ such that $(-1)$ contact surgery on $K$ produces 
$(Y_2, \xi_2)$. If we stabilize $K$ twice, the same integral surgery  becomes a  +1 contact surgery and results in $Y_2$ with a different 
contact structure $\xi'_2$. By \cite{LS}, a +1 surgery on a stabilized Legendrian knot always yields an overtwisted contact structure. 
Thus $\xi'_2$ will be overtwisted; cancelling the +1 surgery by a Legendrian surgery, we recover $(Y_1, \xi_1)$.    
\end{proof}

\begin{cor} If some contact structure on $Y_2$ can be obtained from a tight contact $Y_1$, then the same tight contact structure on $Y_1$
can be obtained by a single Legendrian surgery from an overtwisted contact structure on $Y_2$.
\end{cor}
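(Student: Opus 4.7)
The plan is to deduce the corollary by extending the stabilization argument of the preceding lemma from a single Legendrian knot to a link, which is the content needed once ``Legendrian surgery'' is allowed to refer to surgery on an arbitrary link. Suppose $(Y_2, \xi_2)$ is produced from the tight $(Y_1, \xi_1)$ by Legendrian surgery on a Legendrian link $L = K_1 \cup \cdots \cup K_n$. First I would stabilize each component $K_i$ twice inside disjoint Darboux balls to obtain a new Legendrian link $\tilde{L} = \tilde{K}_1 \cup \cdots \cup \tilde{K}_n$; this preserves the smooth surgery framing on every component but decreases the contact framing by $2$, so the same topological surgery is realized as a $(+1)$ contact surgery on each $\tilde{K}_i$. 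The result is the same smooth manifold $Y_2$ equipped with a possibly new contact structure $\xi'_2$.

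Next I would verify that $\xi'_2$ is overtwisted. By the Lisca--Stipsicz result invoked in the lemma's proof, a $(+1)$ contact surgery on a stabilized Legendrian knot creates an overtwisted disk, and a single such component already suffices: the $(+1)$ surgeries on the remaining components take place in the complement of this disk and cannot cap it off. Hence $\xi'_2$ is overtwisted, as required.

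Finally, to recover $(Y_1, \xi_1)$, I would invoke the standard cancellation principle for contact surgery: for each $\tilde{K}_i$, its Legendrian push-off $\tilde{K}_i'$ sits naturally in $(Y_2, \xi'_2)$ as the dual knot, and a $(-1)$ contact (equivalently Legendrian) surgery on $\tilde{K}_i'$ cancels the $(+1)$ surgery on $\tilde{K}_i$. Performing Legendrian surgery simultaneously on all of the resulting dual knots in $(Y_2, \xi'_2)$ therefore returns us to the original tight $(Y_1, \xi_1)$, giving the claimed single Legendrian surgery from an overtwisted contact structure on $Y_2$.

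The main point requiring care is the persistence of overtwistedness across all components of the link: we must be sure that the overtwisted disk produced by any one of the $(+1)$ surgeries on a stabilized component is not destroyed by the other surgeries. This is routine because stabilizations and the ensuing surgeries can be arranged to occur in disjoint neighborhoods, so an overtwisted disk formed locally by one component survives subsequent surgery on the others.
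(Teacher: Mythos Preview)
Your argument is correct and follows exactly the stabilization-and-cancellation idea of the preceding lemma; the paper in fact gives no separate proof for the corollary, regarding it as immediate from that lemma. The only point to flag is interpretive: in the paper ``a single Legendrian surgery'' consistently means surgery on a single knot, and the corollary is meant to be read with that hypothesis as well (so your extension to an $n$-component link, while valid, is not needed). If one does allow links in the hypothesis, your construction produces Legendrian surgery on an $n$-component dual link rather than on a single knot, so the word ``single'' in your final sentence would then be inaccurate; but under the intended single-knot reading everything matches.
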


\begin{prop} Suppose that $Y_2$ can be obtained from $Y_1$ by an integral surgery on an $n$-component link.  Then any tight contact structure $\xi_2$ on $Y_2$
can be obtained from some overtwisted $(Y_1, \xi_1)$  by a Legendrian surgery on a link with $n+1$ components. 
\end{prop}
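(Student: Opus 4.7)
My plan is to construct an $(n+1)$-component $(+1)$ contact surgery diagram in $(Y_2,\xi_2)$ producing some overtwisted $(Y_1,\xi_1)$, and then invert it componentwise using Legendrian push-offs to obtain the required $(n+1)$-component Legendrian surgery $(Y_1,\xi_1)\to(Y_2,\xi_2)$. Write $L^*=K_1^*\cup\dots\cup K_n^*\subset Y_2$ for the dual integral surgery link, so that integer surgery on $L^*$ with framings $s_1,\dots,s_n$ recovers $Y_1$.

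First I would pick a Darboux ball $B\subset(Y_2,\xi_2)$ disjoint from $L^*$ and place inside it a shark $K_0$ (the stabilized Legendrian unknot with $\tb=-2$ of Figure~\ref{shark}); a single $(+1)$ contact surgery on $K_0$ converts $(Y_2,\xi_2)$ into the overtwisted $(Y_2,\xi_2\#\xi_o)$, where $\xi_o$ is the standard overtwisted $S^3$ summand with $d_3=1/2$. Next, in $(Y_2,\xi_2\#\xi_o)$ and disjointly from $B$, I would Legendrianize each $K_i^*$ so that $\tb(K_i^*)=s_i-1$, which is the framing needed for $(+1)$ contact surgery to realize the integer coefficient $s_i$. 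Exactly as in the proof of Proposition~\ref{ot-sur}, this is achievable in an overtwisted contact structure: stabilization lowers $\tb$ and Legendrian connect sum with the boundary of an overtwisted disk raises it. Arranging at least one of the $K_i^*$ to be stabilized, \cite{LS} ensures that the resulting contact structure $\xi_1$ on $Y_1$ is overtwisted.

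This produces the desired $(n+1)$-component $(+1)$ contact surgery $(Y_2,\xi_2)\to(Y_1,\xi_1)$. To finish, I invert: the Legendrian push-offs of $K_0,K_1^*,\dots,K_n^*$ form a disjoint $(n+1)$-component Legendrian link in $(Y_1,\xi_1)$, with the push-off of $K_0$ living in the preserved copy of $B$ (now carrying $\xi_o$) and the push-offs of the $K_i^*$ lying near the dual cores, and componentwise $(+1)/(-1)$ cancellation shows that Legendrian surgery on this link returns $(Y_2,\xi_2)$. The main obstacle will be the Legendrianization step: one must realize the $K_i^*$ with prescribed (possibly large) Thurston--Bennequin numbers inside $(Y_2,\xi_2\#\xi_o)$ using only the single overtwisted summand introduced by $K_0$, and keeping all manipulations disjoint from $B$, since this disjointness is what allows the push-off of $K_0$ to survive the $L^*$ surgery and yield a genuine link in $Y_1$.
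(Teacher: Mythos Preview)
Your proposal is correct and follows the same route as the paper: do a $(+1)$ surgery on the shark to make $(Y_2,\xi_2)$ overtwisted, use the construction of Proposition~\ref{ot-sur} (whose proof you are essentially unpacking via the dual link and $\tb$-adjustment) to realize this overtwisted $(Y_2,\xi_2')$ as Legendrian surgery on an $n$-component link in some overtwisted $(Y_1,\xi_1)$, and add the push-off of the shark as the $(n{+}1)$-st component. The obstacle you flag is not genuine: since $\xi_2\#\xi_o$ is already overtwisted, Eliashberg's classification lets you connect-sum with further copies of the overtwisted $S^3$ with $d_3=-\tfrac12$ placed away from $B$ without changing the isotopy class, and those auxiliary summands (exactly as in the proof of Proposition~\ref{ot-sur}) supply the material for raising $\tb(K_i^*)$ while keeping all manipulations disjoint from $B$.
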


\begin{proof} First do a  +1 contact surgery on a shark in $(Y_2, \xi_2)$ to obtain an overtwisted $(Y_2, \xi'_2)$. By  Proposition \ref{ot-sur},
a Legendrian surgery on some $n$-component link in an overtwisted $(Y_1, \xi_1)$ produces  $(Y_2, \xi'_2)$.
 Now, do  a Legendrian surgery on a push-off 
of the shark to undo the +1 surgery and recover  $(Y_2, \xi_2)$.
\end{proof}

Proposition \ref{-L4n+3} gives an example where the result of the previous proposition cannot be improved.

We conclude with a slightly more interesting 
example: Legendrian surgeries between various contact structures on $S^3$ and $L(7,4)$. Recall  by \cite{Ho} there are only 
three tight contact structures on $L(7, 4)$, and they are given by Legendrian surgeries depicted in Figure \ref{L74}. 
The values of the $d_3$-invariant can be computed to be 0 for the contact structure $\Xi_0$ given by the more symmetric surgery diagram, and $-\displaystyle{\frac{2}{7}}$ for the other 
two contact structures $\Xi_1, \Xi_2$. 
\begin{figure}[htb] 
\includegraphics[scale=0.7]{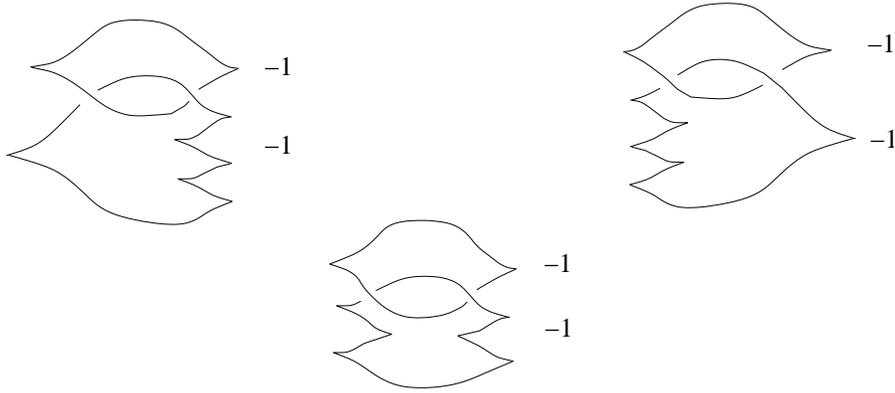}
\caption{Tight contact structures $\Xi_0$ (middle), $\Xi_1$ (left), and $\Xi_2$ (right) on $L(7, 4)$.}\label{L74} 
\end{figure}  

\begin{prop} Although  $(L(7, 4), \Xi_0)$ can be obtained from $(S^3, \xi_{std})$, the contact manifolds  $(L(7, 4), \Xi_1)$ and 
$(L(7, 4), \Xi_2)$ cannot be obtained by a surgery on Legendrian knot from any contact structure on $S^3$, tight or overtwisted. 
\end{prop}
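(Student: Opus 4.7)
The plan is to combine the smooth classification of integral knot surgeries with a case-by-case contact invariant computation. First, I would invoke the classification from Proposition~\ref{-L4n+3} (Rasmussen's theorem) to conclude that the only knot in $S^3$ with an integral surgery producing $L(7,4)$ is, up to mirror, the trefoil $T_{2,3}$ with smooth coefficient $\mp 7$. Converting to the contact framing $\tb-1$, any single Legendrian knot surgery producing $L(7,4)$ must be Legendrian surgery on a positive Legendrian trefoil with $\tb=-6$, or on a negative Legendrian trefoil with $\tb=+8$.

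Next, I would enumerate the available Legendrian representatives across all contact structures on $S^3$. In the tight $(S^3,\xi_{std})$, the positive trefoil has $\tb_{\max}=1$, so $\tb=-6$ is reached by exactly seven stabilizations, giving $\rot\in\{\pm 1,\pm 3,\pm 5,\pm 7\}$; the negative trefoil has $\tb_{\max}=-6$, so the $\tb=+8$ option is unavailable in tight $S^3$. In overtwisted contact structures on $S^3$, loose trefoil representatives exist with essentially arbitrary $(\tb,\rot)$ subject to parity, while non-loose representatives satisfy bounds analogous to the ones in Section~\ref{tb}.

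For each admissible Legendrian trefoil $K \subset (S^3,\xi)$, I would compute the $d_3$-invariant and the induced $\Spinc$-structure on $L(7,4)$ of the surgered contact manifold using formula~\eqref{d3} and the restriction map $H^2(X) \to H^2(L(7,4)) = \ZZ/7$ for the trefoil 2-handle trace $X$. Compare these with the invariants of $\Xi_1,\Xi_2$---namely $d_3 = -2/7$ together with the two nonzero $c_1$-values read off from the chain $[-2,-4]$ diagram---to show that no matching occurs. For the overtwisted cases, the $d_3$-residue modulo $\ZZ$ of the result is constrained by the surgery formula, bounding the possible $(d_3,\Spinc)$ pairs and excluding those of $\Xi_1,\Xi_2$.

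The main obstacle I anticipate is the identification of the $H^2(L(7,4))$-labeling coming from the trefoil 2-handlebody with that coming from the chain plumbing $[-2,-4]$: these two $\ZZ/7$-identifications a priori differ by a unit, and an explicit Kirby reduction relating the two surgery diagrams is needed in order to track which residue in the trefoil labeling corresponds to $c_1$ of $\Xi_1,\Xi_2$. Once this correspondence is pinned down, the proposition reduces to a finite check over the short list of admissible $(\tb,\rot)$ values against the three tight classes.
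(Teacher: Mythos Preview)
Your proposal has a chirality slip and, more seriously, a genuine gap in the overtwisted case.

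On chirality: Rasmussen's result gives $L(7,4)$ as $-7$ surgery on the \emph{left-handed} trefoil (equivalently, $-L(7,4)$ is $+7$ surgery on the right-handed trefoil, as in Proposition~\ref{-L4n+3} with $n=1$). So the relevant Legendrian has $\tb=-6$ on the \emph{negative} trefoil, not the positive one; your ``$\tb=+8$'' option does not arise. In tight $(S^3,\xi_{std})$ the left-handed trefoil has $\tb_{\max}=-6$ with a \emph{unique} representative at that level and $\rot=0$, which is why exactly $\Xi_0$ is reached from $\xi_{std}$. Your list of eight rotation numbers comes from stabilizing the wrong trefoil.

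The real gap is in the overtwisted case. You propose to rule out $\Xi_1,\Xi_2$ by checking that no admissible $(\tb,\rot)$ on a trefoil produces the correct $(d_3,\Spinc)$ pair. This does not work: carrying out the $d_3$ computation for a non-loose left-handed trefoil with $\tb=-6$ and unknown $\rot$, the target $d_3(\Xi_i)=-2/7$ together with $d_3(S^3,\xi)\in\ZZ+\tfrac12$ forces $\rot^2\equiv 1\pmod 7$, and $\rot=\pm 1$ with $d_3(\xi_{otw})=-\tfrac12$ is a consistent solution. So homotopy invariants alone do \emph{not} obstruct a Legendrian surgery from the overtwisted $S^3$ with $d_3=-\tfrac12$ to $(L(7,4),\Xi_i)$; your ``finite check'' would find a match rather than exclude one. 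The paper closes this with Heegaard Floer homology: since both $S^3$ and $L(7,4)$ are $L$-spaces and the degree shift of $F_{X,\t}:\HF(-L(7,4),\s)\to\HF(-S^3)$ equals the difference of $d$-invariants, the map is injective, hence $c(\xi_{otw})=F_{X,\t}(c(\Xi_i))\neq 0$ and $\xi_{otw}$ would have to be tight---a contradiction. Your outline also never says why loose trefoils may be discarded; the point is that Legendrian surgery on a loose knot leaves an overtwisted disk in the complement, so the result is overtwisted and cannot be $\Xi_i$.
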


\begin{proof} First, by \cite{Ras} we know that the integral surgery that can produce $L(7, 4)$ from $S^3$ has to be $-7$ surgery on the 
left-handed trefoil. If we start with a tight $S^3$, there is a unique Legendrian left-handed trefoil with $\tb=-6$ (by the classification 
of Legendrian torus knots \cite{EH1}). This trefoil has $rot=0$, so the Legendrian surgery produces  $(L(7, 4), \Xi_0)$. 

If we want to produce the contact structure $\Xi_1$ or $\Xi_2$ from an overtwisted $S^3$, we need to look for a non-loose 
left-handed trefoil with $\tb=-6$. Notice that for such a knot, the Thurston--Bennequin bound will be satisfied: indeed, $tb$ and $rot$
of a null-homologous non-loose knot are restricted by 
the inequality
$$
-|\tb(K)| +|\rot(K)|\leq 2g-1.
$$
(see \cite{Dym}, Proposition 5.3 in the arxiv version).

It follows that $|\rot(\text{trefoil})| \leq 7$. To pin down the value of $\rot$, notice that 
siince contact structures on $S^3$ have half-integer values of $d_3$, by (\ref{d3}) and (\ref{c1rot})  we have 
$$
\frac{m}2+ \frac{c_1^2-2+3}{4} = -\frac{2}{7}, \qquad \qquad c_1^2= - \frac{\rot^2}{7}
$$  
for some integer $m$. Thus, 
$\displaystyle{\frac{\rot^2-8}{7}=2m+1}$ is an integer, $\rot^2 \equiv 1 \mod 7$, and then $\rot=\pm 1$, $m=-1$.
This means that a Legendrian surgery  in $S^3$ can produce   $(L(7, 4), \Xi_1)$ or $(L(7,4), \Xi_2)$
only if  $S^3$ is equipped with the  contact structure $\xi_{otw}$ with $d_3(\xi_{otw})=-\displaystyle{\frac{1}2}$. 

Our next claim is that the contact structure $\xi_{otw}$ on $S^3$ in this case would have to be tight rather than overtwisted.
(This leads to a contradiction, proving the proposition.)
 Indeed, for $\Xi=\Xi_1$ or $\Xi_2$, we show that Heegaard Floer contact invariant 
is non-zero for the contact structure obtained from  $(L(7, 4), \Xi)$ by a $+1$ contact surgery on the knot dual to the left-handed trefoil.
Consider the map 
$$
F_{X, \t}: \HF(-L(7, 4), \s) \to \HF(-S^3)
$$
that is induced by the surgery cobordism $X$ equipped with the $\Spinc$ structure $\t$ induced by the Stein structure on the 2-handle.  The oriented smooth 4-manifold $X$ with boundary that gives this cobordism from  $-L(7, 4)$ to $-S^3$
can also be viewed as the cobordism from $S^3$ to $L(7,4)$ given by our Legendrian surgery, i.e   the topological $-7$ surgery on the left-handed trefoil.
  Since $H_2(X)$ is generated by a surface   
with self-intersection $-7$, $X$ is negative definite. Since $S^3$ and $L(7, 4)$ are both $L$-spaces, 
by \cite[Lemma 2.5]{LS1}
the map  $\widehat{F}_{X, \t}$ will be injective as long as its degree shift equals to $d(-S^3) - d(-L(7, 4), \s)$, 
where the $\Spinc$ structure $\s$ is the restriction of $\t$ to $L(7, 4)$.
But  $d(-S^3) - d(-L(7, 4), \s) = d(L(7, 4), \s) - d(S^3) =  d_3(\Xi) - d_3(\xi_{otw})$, since we proved that $d_3(\xi_{otw})= -\frac{1}{2}= d(S^3)-\frac{1}{2}$, and $d_3(\Xi)= d(L(7, 4), \s)- \frac{1}{2}$ because $\s$ is induced by $\Xi$. We know that 
$$
d_3(\Xi) - d_3(\xi_{otw}) = \frac{(c_1(X, \t))^2 -2 \chi(X) - 3 \sigma(X)}{4}
$$
since $(X, \t)$ is a Stein cobordism between the two contact manifolds; on the other hand, the latter quantity is the degree shift of $F_{X, \t}$.
Therefore,  $c(\xi_{otw})=F_{X, \t}(c(\Xi))$  is non-zero. 
\end{proof}

  \end{document}